\newcounter{count}
\newcommand{\abs}[1]{\left| #1  \right|}
\newcommand{\B}[1]{{\boldsymbol #1}}
\newcommand{\R}{\mathbb{R}}
\newcommand{\mS}{\mathcal S}
\newcommand{\eps}{\varepsilon}
\newcommand*{\defeq}{\mathrel{\vcenter{\baselineskip0.5ex \lineskiplimit0pt
                     \hbox{\scriptsize.}\hbox{\scriptsize.}}}%
     		     =}
\def\XXint#1#2#3{{\setbox0=\hbox{$#1{#2#3}{\int}$ }
\vcenter{\hbox{$#2#3$ }}\kern-.6\wd0}}
\newcommand{\bF}{\boldsymbol{F}}
\newcommand{\bS}{\boldsymbol{\sigma}}
\newtheorem{theorem}{Theorem}[section] 
\newtheorem{lemma}{Lemma}[section]
\newtheorem{corollary}{Corollary}[section]
\newtheorem{remark}{Remark}[section]
\newtheorem{proposition}{Proposition}[section]
\newcommand{\tred}[1]{{\color{black}{#1}}}
\title{A Fokker-Planck Framework for Control of Epidemics}
\author{Christian Parkinson and Souvik Roy}
\date{\today}
\begin{document}

\maketitle

\begin{abstract}
We present a control framework for stochastic compartmental models in epidemiology. In this framework, rather than directly controlling the stochastic system, we perform optimal control of an associated Fokker-Planck equation, with the goal of steering the distribution of possible solutions of the stochastic system to some desirable state. In particular, this allows for robust control mechanism with uncertainty not only in the dynamics, but also in the initial data. We formulate and fully analyze a partial differential equation constrained optimization problem, including a proof of existence of optimal controls via analysis of the control-to-state map, and a characterization of optimal controls via the Pontryagin minimum principle. We describe the application of the sequential quadratic Hamiltonian method to our problem, which provides numerical approximations of optimal control maps. We demonstrate our method using a minimal stochastic susceptible-infected-recovered model with different choices of cost functionals that represent different policy-maker concerns.  
\end{abstract}

\section{Introduction}

Since the COVID-19 pandemic, optimal public policy for the control of infectious disease spread has become a problem of acute interest. In this manuscript, we propose a novel framework for control of compartmental epidemiological models based on a partial differential equation (PDE) constrained optimization problem involving the Fokker-Planck equation associated with a stochastic differential equation modeling spread of a generic infectious disease. \tred{This work serves to fill a gap in the literature. While the Fokker-Planck control formulation has been successfully used in models from biology and the applied sciences, it has not been applied to mathematical epidemiology. We contend that the framework is particularly apt for mathematical epidemiology for a few reasons detailed below.}

Several authors studied general optimal control of basic epidemiological models even before the onset of the COVID-19 pandemic \cite{OCgen1,OCgen2,OCgen3}, and since the pandemic, there have been more targeted studies regarding various facets of epidemic control such as quarantine implementation \cite{OCQ1,OCQ2,OCQ3}, vaccination strategies \cite{OCvax1,OCvax2,OCvax3}, spatial heterogeneity \cite{OCspatial1,OCspatial2,OCspatial3}, and human behavioral effects \cite{OCbehavior1,OCbehavior2,OCbehavior3}. Several of these resources consider control of systems involving stochastic effects, but since they rely on classical results such as the (stochastic) Pontryagin maximum principle \cite[Ch. 2.5]{FlemingRishel}, \cite[Ch. 3.3]{YongZhou}, they require deterministic initial data. 

Parallel to this work in optimal control of epidemic models, there has been theoretical development of so-called Liouville \cite{Liou1} or Fokker-Planck \cite{FPCgen1,Borzi1} control frameworks, wherein one controls a system of deterministic or stochastic differential equations with uncertain initial data, with the goal of achieving a desirable distribution of possible states at some future time. Among other applications, these have been successfully used in models of collective motion for both pedestrians \cite{FPmotion1,FPmotion2,FPmotion3} and traffic flow \cite{FPtraffic}, object identification in image analysis \cite{FPimageanalysis}, and optimal medical treatment \cite{FPbio1,FPbio2,FPbio3}.    

While the Fokker-Planck control framework has not yet been applied to epidemiological models, we argue that it is very natural for this application for a few reasons. First, uncertainty in the total infection size is virtually guaranteed in the early stages of an epidemic, and this framework allows for uncertain initial data in contrast to any framework based on classical methods such as dynamic programming or direct application of the Pontryagin minimum principle to the stochastic system. Second, this framework provides robust control strategies which consider the whole distribution of possible solutions to the system of stochastic differential equations that describe disease spread, rather than optimizing for an individual trajectory. This is especially advantageous in the presence of $L^1$ control costs where the optimal control strategy is often bang-bang (see \cite[Ch. 17]{BangBang1} or \cite{BangBang2,BangBang3}), meaning that nearby trajectories may have significantly different optimal controls. For such systems, direct open loop control of the stochastic system is precarious because random perturbations may push the system to nearby states where the open loop controller provides severely suboptimal control actions. Third, control of the distribution of possible trajectories provides natural answers to probabilistic questions that may be of interest for policy-makers. For example, assuming a certain stochastic model, suppose a policy-maker wants to know whether a given control strategy will result in a 50\% likelihood of disease extinction by a given future time. This question is more difficult to answer if one considers control of individual trajectories, but very easy to answer if one controls the distribution of all possible trajectories.  Thus, to close this gap in the literature, we describe and analyze a simple open loop Fokker-Planck control framework applied to a minimal stochastic model for epidemiology, though with small tweaks, this framework can be applied much more broadly. \tred{In particular, while inspired by the recent COVID-19 pandemic, this work applies to generic disease spread and is purposefully agnostic to the particularities of any single epidemic. }

\tred{Beyond the mathematical analysis, our results illustrate the manner in which different policy-maker priorities lead to different control strategies. In different real-world scenarios, goals of the policy-maker may vary from disease annihilation to ``flattening the curve" to preserving a large susceptible population while awaiting vaccine availability. These, and any number of other public-health objectives, are easily modeled in this framework, and our results demonstrate the manner in which each of these can be at least partially accomplished while also accounting for the cost tradeoff they entail. For example, if the policy-maker assesses cost based purely on disease prevalence, a combination of non-pharmaceutical interventions and increased treatment efforts are employed. By contrast, if the goal is to ``flatten the curve" to avoid overwhelming the healthcare system, a much stronger use of non-pharmaceutical interventions supplemented by vaccination is the optimal strategy. Thus beyond allowing the policy-maker to evaluate control strategies in the presence of both initial and dynamic stochasticity, Fokker-Planck control provides a natural mechanism for evaluating control strategies and probabilistic public-health outcomes under varying policy-maker concerns. }

Besides formulation and analysis of the Fokker-Planck control framework for epidemiology, we also introduce the sequential quadratic Hamiltonian (SQH) method \cite{SQH} in the context of optimal control of PDE-based epidemiological models for the first time (though it has been used for ODE-based models of epidemiology in \cite{SQHepi1} and in the recent preprint \cite{OCbehavior2}). This is a numerical method for approximating optimal control maps which is equipped with a much more robust theoretical framework than some other popular methods such as the forward-backward sweep method used by \cite{OCgen3, OCvax3, OCQ2, OCbehavior1} among many others. The SQH method is relatively easy to implement and is completely explainable, in contrast to popular software packages such as the freely available IPOPT\footnote{\url{https://github.com/coin-or/Ipopt}} \cite{IPOPT}, as used by \cite{OCQ1,OCvax2}, which is very robust and explainable to experts in optimization, but is often effectively a black box to practitioners.

The remainder of the manuscript is structured as follows. In section~\ref{sec:model}, we introduce a simple controlled stochastic susceptible-infected-recovered (SIR) model, make some remarks about the associated controlled Fokker-Planck equation, and formulate an optimal control problem. In section~\ref{sec:analysis}, we prove existence of optimal control maps for our problem via analysis of the control-to-state map and derive first-order optimality conditions which characterize the optimal controls. In section~\ref{sec:computation}, we discuss the numerical methods we use to computationally approximate the optimal controls, and in particular, give an exposition of the SQH method \cite{SQH} as it pertains to our problem. In section~\ref{sec:results}, we present results of the application of our method to control problems involving our model wherein costs are accumulated in different manners representing different potential concerns of policy-makers. Finally, we give some concluding remarks and discuss avenues of future work in section~\ref{sec:conclusion}. 

\section{A controlled stochastic SIR model and Fokker-Planck equation} \label{sec:model}

We begin from the basic SIR model of Kermack and McKendrick \cite{SIR_KM}, which models infectious disease spread through a homogeneous population which is split into susceptible, infected, and recovered subpopulations. We assume a natural birth rate $b$, natural death rate $\delta$, infection rate $\beta$ and recovery rate $\gamma$, and we add three controls which may be available to a real-world policy-maker: (1) a reduction in infectivity of the disease due to the implementation of non-pharmaceutical intervention (NPI) measures such as mask-wearing, shelter-at-home, or social-distancing mandates; (2) vaccination efforts which move susceptible individuals directly to the recovered class without passing through the infected class; (3) increase in recovery rate due to treatment efforts. Representing these controls (respectively) by $\alpha \in [0,\alpha_{\text{max}}], v \in [0,v_{\text{max}}], \eta \in [0,\eta_{\text{max}}]$, this leads to the model \begin{equation}
    \label{eq:controlledSIR}
    \begin{split}
        \dot S &= b-(1-\alpha(t))\beta SI - (v(t)+\delta)S, \\
        \dot I &= (1-\alpha(t))\beta SI - (\gamma+\eta(t)+\delta)I, \\
        \dot R &= (\gamma+\eta(t))I+v(t)S - \delta R,
    \end{split}
\end{equation} along with nonnegative initial conditions $S(0) = S_0, I(0) = I_0, R(0) = R_0$. The flow diagram for this model is included in figure~\ref{fig:flowDiagram}, where any arrows flowing out of a compartment denote flow proportional to the compartment they are leaving. In particular, the colored arrows in the diagram represent our modifications to the basic SIR model. These are the terms where the control variables appear. 

\begin{figure}[t!]
    \centering
    \includegraphics[width=0.7\linewidth]{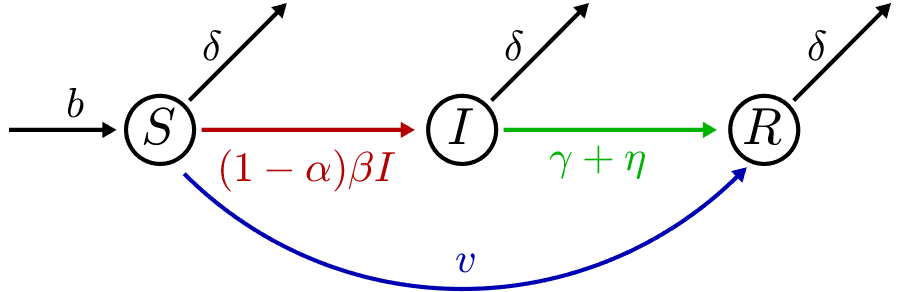}
    \caption{The flow diagram for \eqref{eq:controlledSIR} in the absence of noise. Colored lines represent our modifications to the basic SIR model; these are the terms where the control variables $\alpha,\eta,v$ appear.}
    \label{fig:flowDiagram}
\end{figure}

We add stochasticity into the model in two ways. First, we treat the initial data as uncertain, so that rather than being specified exactly, we assume $S_0,I_0,R_0$ are drawn from a specified probability distribution. And second, we perturb each equation in \eqref{eq:controlledSIR} with an It\^o noise term with respective strengths $\sigma_S,\sigma_I,\sigma_R$. In the epidemiological literature, common choices for these stochastic terms are to either let the noise in each equation be proportional to the population which the equation describes \cite{JYJS,Yamakazi,ZZY}, or to more explicitly model uncertainty in disease transmission by taking $\sigma_I = -\sigma_S \propto SI$ which is equivalent to formally replacing $\beta$ with $\beta+\sigma_\beta \dot W$ where $W$ is It\^o white noise \cite{Gray,JJS,PW3,Tornatore}. In the Fokker-Planck control literature, it is relatively common to let these terms be constant for ease of analysis; for example \cite{Borzi1,FPmotion3}. In any of these cases, $\sigma_S$ and $\sigma_I$ are independent of $R$, which means that $\dot S$ and $\dot I$ are entirely independent of $R$, at which point the $R$ compartment is simply a receptacle for those who are vaccinated or recovered, and has no bearing on disease spread dynamics. In accordance with this observation and in order to simplify the model, we will always make the assumption that $\sigma_S,\sigma_I$ are independent of $R$, whereupon we focus on the following system of two stochastic differential equations describing infectious disease spread: \begin{equation}
    \label{eq:SIR2d}
    \begin{split}
        dS &= \Big(b-(1-\alpha)\beta SI -(v(t)+\delta)S\Big)dt + \sigma_S(S,I,\alpha,\eta,v)dW_1,\\
        dI &= \Big((1-\alpha)\beta S I -(\gamma + \eta(t)+\delta)I\Big)dt + \sigma_I(S,I,\alpha,\eta,v)dW_2
    \end{split} 
\end{equation} where $W_1,W_2$ are the (stochastically independent) components of a 2-dimensional Wiener process. 

\tred{In the case of constant controls, typical analysis of epidemiological models like this entails derivation of stability analysis of \emph{disease free} and \emph{endemic} equilibria in terms of the so-called \emph{reproductive ratio}. For the deterministic model \eqref{eq:controlledSIR}, this is entirely classical. For the stochastic models it can be accomplished as in \cite{Gray,JJS,PW3,Tornatore}. Because the focus of this study is the optimal control of the Fokker-Planck equation detailed below, we omit the discussion of these concepts. For the sake of simulations in section \ref{sec:computation}, we simply choose $\beta,\gamma$ such that, in the absence of controls, a large outbreak of the disease will indeed occur.}

\tred{For more compact notation, we can define $X = (S,I)$ and $u =(u_1,u_2,u_3) \defeq \tred{(\alpha,\eta,v)}$, and write \eqref{eq:SIR2d} as \begin{equation}
    \label{eq:dynamics}
    dX = \bF(X,u)dt + \bS(X,u)d\B W
\end{equation} where \begin{equation} \label{eq:F}\B F(X,u) = \begin{bmatrix}
    b-(1-u_1)\beta SI - (u_2+\delta)S, \\
    (1-u_1)\beta SI - (\gamma+u_3+\delta)I.
\end{bmatrix}\end{equation} and \begin{equation} 
\label{eq:Sig} \bS(x,u) = \begin{bmatrix} \sigma_1(X,u) & 0 \\ 0 & \sigma_2(X,u) \end{bmatrix} = \begin{bmatrix}
    \sigma_S(S,I,\alpha,\eta,v) & 0 \\ 0 & \sigma_I(S,I,\alpha,\eta,v)
\end{bmatrix},
\end{equation} and $\B W$ is a 2-dimensional Weiner process with stochastically independent components. 

To describe the time evolution of the stochastic process governed by~\eqref{eq:dynamics}, we consider its probability density function (PDF). Let \( x = (x_1, x_2)^T \), and define \( f(x,t) \) as the PDF of \( X(t) \), representing the probability density of the system being in state \( x \) at time \( t \).} We assume the existence of a bounded domain $\Omega \subset \R^2$ with Lipschitz boundary which is positively invariant under the dynamics \eqref{eq:dynamics}, whereupon we consider initial data $X_0$ drawn from a probability distribution with density function $f_0(x)$ defined on $\Omega$. Note that existence of such $\Omega$ is not a prohibitive assumption. So long as the functions $\sigma_S,\sigma_I,\tfrac{\sigma_S}S, \tfrac{\sigma_I}{I}$ are bounded for bounded values of their inputs (which is true, for example, when $\sigma_S,\sigma_I$ are continuous and proportional to $S,I$ respectively), global existence of a bounded, nonnegative solution of \eqref{eq:dynamics} follows directly as in \cite{Gray,JJS,JYJS,ZZY}, whereupon one can take $\Omega = (a_{\text{min}},a_{\text{max}})^2$ for any $a_{\text{min}} \le 0$ and $a_{\text{max}}>0$ sufficiently large. Besides this, for the sake of theoretical results below, we will always assume that $\sigma_S^2,\sigma_I^2$ are twice continuously differentiable in $x$, and continuous in $u$. 

Given this, the Fokker-Planck equation describing the evolution of the probability distribution function for the solution of \eqref{eq:dynamics} is given by \begin{equation}\label{eq:FPcontrolled}
\begin{aligned}
&\partial_t f(x,t) + \nabla \cdot \big(\bF(x, u)\,f(x,t)\big) = 
\frac{1}{2}\sum_{j=1}^2 \frac{\partial^2}{\partial x_j^2}\big(\sigma_j^2(x,u)\,f(x,t)\big),\quad \mbox{in } \Omega \times(0,T],\\
&f(x,0) = f_0(x), \quad \mbox{in } \Omega,
\end{aligned}
\end{equation} along with zero flux boundary conditions $\mathcal F \cdot \hat n = 0$ on $\partial \Omega \times (0,T]$, where $\mathcal F$ is the flux, defined componentwise by \begin{equation} \label{eq:flux}
\mathcal{F}_j(x,t) = \frac 1 2\frac{\partial}{\partial x_j} \Big(\sigma_j(x,t)^2 f(x,t) \Big) - \bF_j(x,u) f(x,t),\quad j = 1,2.
\end{equation} For a derivation of the Fokker-Planck equation and discussion of related topics, see \cite{FPBook}. The time horizon $T > 0$ in \eqref{eq:FPcontrolled} is somewhat artificial. In theory, the time interval $(0,T)$ could be replaced with $(0,\infty)$. However, for the sake of formulating an optimal control problem, we will consider the finite time interval. 

Note that \eqref{eq:FPcontrolled} can be written in divergence form in which case, dropping arguments for brevity, the equation reads \begin{equation}\label{eq:FPdiv}
\partial_t f + \sum^2_{j=1} \Big(b_j\frac{\partial f}{\partial x_j}\Big) + cf  = \frac 1 2 \sum^2_{j=1} \frac{\partial}{\partial x_j}\Big(\sigma_j^2 \frac{\partial f}{\partial x_j} \Big)
\end{equation} where $b_j = F_j - \frac{\partial }{\partial x_j}\sigma_j^2$ and $c = \frac{\partial b_1}{\partial x_1} + \frac{\partial b_2}{\partial x_2}.$ This formulation is amenable to the definition of weak solutions as described in \cite[Ch. 7.1]{Evans},\cite[Ch. 3.5]{WYW}. Formally, we say that $f$ is a weak solution of \eqref{eq:FPcontrolled} if it satisfies the initial condition and {\small\begin{equation}
    \label{eq:weakFP}
    \int_{\Omega}\bigg( f_t \varphi + \frac 1 2 \sum^2_{j=1} b_j \frac{\partial f}{\partial x_j} \varphi + cf\varphi + \frac 1 2\sum^2_{j=1} \sigma^2_j \frac{\partial f}{\partial x_j}\frac{\partial \varphi }{\partial x_j}\bigg) dxdt = 0, \,\, \text{ for all } \varphi \in C_c^{1}(\Omega) \text{ and a.e.} \, t \in [0,T].
\end{equation} }

Since $\B F(x,u)$ is a smooth function of $x$ and remains bounded so long as $u$ is bounded and $\sigma_S,\sigma_I$ are twice continuously differentiable in $x$, and continuous in $u$, the coefficients $b_j$ and $c$ are in $L^\infty$. Thus the following proposition regarding existence and uniqueness of a weak solution of \eqref{eq:FPcontrolled} follows from \cite[Ch. 3, Theorem 3.5.1]{WYW}, \cite[Ch. IV, Theorem 9.1]{Ladyzhenskaja}.

\begin{proposition} \label{prop:FPexistence}
For fixed bounded control map $u:[0,T]\to  \R^3$ and $f_0 \in L^2(\Omega)$, there exists a unique weak solution of \eqref{eq:FPcontrolled} satisfying  $f  \in L^2(0,T;H^1(\Omega))$  and $f_t \in L^{2}(0,T;H^{-1}(\Omega))$. Moreover, if $f_0 \ge 0$ in $\Omega$ and $\|f_0\|_{L^1(\Omega)} = 1$, then $f\ge 0$ in $Q$ and $\|f(\cdot,t)\|_{L^1(\Omega)} = 1$ for all $t \in (0,T].$
\end{proposition}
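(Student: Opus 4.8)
The plan is to quote the cited parabolic-theory results for the first assertion, and then handle nonnegativity and mass conservation by hand using the divergence-form structure. First I would verify that equation \eqref{eq:FPdiv} satisfies the structural hypotheses of \cite[Ch. 3, Theorem 3.5.1]{WYW} (or \cite[Ch. IV, Theorem 9.1]{Ladyzhenskaja}): the principal part $\tfrac12\sum_j \partial_{x_j}(\sigma_j^2 \partial_{x_j} f)$ is uniformly parabolic since $\sigma_j^2 \ge 0$ is bounded below away from zero on $\overline\Omega$ (or, if degeneracy is allowed, one invokes the version of the theorem that only needs nonnegative diffusion plus the lower-order structure); the coefficients $b_j = F_j - \partial_{x_j}\sigma_j^2$ and $c = \partial_{x_1}b_1 + \partial_{x_2}b_2$ are continuous on $\overline\Omega \times [0,T]$ by the smoothness assumptions on $\B F$ and on $\sigma_j^2$, hence bounded; the boundary is Lipschitz and the conormal (zero-flux) boundary condition is of the type covered; and $f_0 \in H^1(\Omega)$ is the correct trace space. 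This yields existence and uniqueness of a weak solution $f \in H^{2,1}(Q)$ and, via the parabolic maximum principle for the divergence-form operator, the comparison principle needed below.

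For nonnegativity, I would argue that if $f_0 \ge 0$ then $f \ge 0$ in $Q$. The clean way is the weak maximum principle: test the weak formulation \eqref{eq:weakFP} with $\varphi = f^- \defeq \max(-f,0)$ (a legitimate test function in $H^{2,1}(Q)$ with vanishing conormal derivative, after the usual regularization/truncation), integrate by parts in the principal term, and obtain a Grönwall inequality $\frac{d}{dt}\|f^-(\cdot,t)\|_{L^2(\Omega)}^2 \le C\|f^-(\cdot,t)\|_{L^2(\Omega)}^2$ with $\|f^-(\cdot,0)\|_{L^2} = 0$, forcing $f^- \equiv 0$. The boundary term drops precisely because of the zero-flux condition $\mathcal F \cdot \hat n = 0$, so no sign contribution enters from $\partial\Omega$; this is the one place where the choice of boundary condition is essential.

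For conservation of mass, I would integrate the Fokker-Planck equation in its original flux form \eqref{eq:FPcontrolled} over $\Omega$: by the divergence theorem,
\begin{equation*}
\frac{d}{dt}\int_\Omega f(x,t)\,dx = -\int_\Omega \nabla\cdot\mathcal F\,dx = -\int_{\partial\Omega}\mathcal F\cdot\hat n\,dS = 0,
\end{equation*}
using \eqref{eq:flux} and the zero-flux boundary condition, so $\|f(\cdot,t)\|_{L^1(\Omega)} = \int_\Omega f(x,t)\,dx = \int_\Omega f_0\,dx = \|f_0\|_{L^1(\Omega)} = 1$, where the first equality uses $f \ge 0$ from the previous step. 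To make this rigorous at the regularity $H^{2,1}(Q)$ I would justify the interchange of $\tfrac{d}{dt}$ and $\int_\Omega$ by an approximation argument or simply by testing \eqref{eq:weakFP} with $\varphi \equiv 1$ (valid since $\nabla 1 \cdot \hat n = 0$), which directly gives $\frac{d}{dt}\int_\Omega f\,dx = 0$ in the weak sense.

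The main obstacle I anticipate is not any single step but the bookkeeping around the diffusion coefficients: if one does not assume $\sigma_j^2$ is bounded below by a positive constant, the equation is only degenerate parabolic, and one must be careful that the cited theorems (and the maximum-principle argument) still apply — typically by a vanishing-viscosity approximation $\sigma_j^2 \mapsto \sigma_j^2 + \eps$, deriving the estimates uniformly in $\eps$, and passing to the limit. Assuming, as is natural here and consistent with the paper's hypotheses, that the diffusion is nondegenerate on $\overline\Omega$, the proof is essentially an assembly of standard parabolic results plus the two short a priori computations above.
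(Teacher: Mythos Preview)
Your proposal is correct and follows exactly the route the paper takes: the paper does not give a formal proof of this proposition but simply states that existence and uniqueness follow from \cite[Ch.~3, Theorem 3.5.1]{WYW} and \cite[Ch.~IV, Theorem 9.1]{Ladyzhenskaja}, and then remarks that ``mass and positivity preservation follow from classical analysis of the equation, not directly from these regularity results.'' Your sketch fills in precisely that classical analysis---testing with $f^-$ for nonnegativity and testing with $\varphi\equiv 1$ for mass conservation---and your flag about degeneracy of $\sigma_j^2$ is well-placed, since the paper's own numerical example has $\sigma_j$ vanishing on parts of $\partial\Omega$, a point the paper does not address.
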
  

We henceforth define our solution space $$\mathcal W = \{f  \in L^2(0,T;H^1(\Omega)) \,\, : \,\, f_t  \in L^2(0,T;H^{-1}(\Omega))\}.$$ Since the coefficients $b_j,c,\sigma^2_j$ are all bounded, standard parabolic regularity results (e.g. \cite[Ch. 7, Thm. 2]{Evans}) give a bound on $\|f\|_{L^2(0,T;H^1(\Omega))}$ which is uniform over choices of controls $u$ taking values in a compact set. Given this, we consider controls taking values in the rectangle $R_U = [0,\alpha_{\text{max}}]\times [0,v_{\text{max}}] \times [0,\eta_{\text{max}}]$, with \tred{$\alpha_{\text{max}} \leq 1$},  which have total variation bounded by some constant $M>0$.  That is, the admissible control space is \begin{equation} \label{eq:admissibleControl}
 U_{\text{ad}} \defeq \Big\{u:[0,T] \to \R^3 \, \Big| \,u_i \in BV[0,T] \text{ with } TV(u_i) \le M, \,\, \text{ for } i=1,2,3 \Big\} \subset (L^\infty[0,T])^3.
\end{equation} We recall that the total variation $TV(v)$ of a function $v:[0,T]\to\R$ is defined by \begin{equation} \label{eq:TV}
    TV(v) = \sup_{\varphi \in C^1[0,T], \|\varphi\|_{L^\infty[0,T]} \le 1} \int^T_0 v(t) \varphi'(t)dt,
\end{equation} and the space $BV[0,T]$ is the set of all functions $v$ with $TV(v)<\infty$. We discuss this choice of control space shortly.

In a slight abuse of vocabulary, for a given $u \in U_{\text{ad}}$, we will often refer to the function $f$ guaranteed by proposition \ref{prop:FPexistence} as the solution of \eqref{eq:FPcontrolled} corresponding to $u$, brushing over the fact that we are considering a weak solution as defined above.

Our optimal control problem for \eqref{eq:FPcontrolled} is then stated abstractly as \begin{equation} \label{eq:optControl} \begin{split}
&\min_{f,u} J(f,u) \defeq \int^T_0 \ell(u(t)) dt + \int_\Omega K(x)f(x,T)dx+ \int^T_0 \int_\Omega G(x,t)f(x,t) dx dt  \\
&\text{such that } u\in U_{\text{ad}} \text{ and } f \text{ satisfies } \eqref{eq:FPcontrolled}, 
\end{split}\end{equation} for some functions $\ell,K,G$. That $J(f,u)$ is bounded below follows, as long as $K$ and $G$ are bounded, from the nonnegativity and $L^1$ boundedness of $f$, which is uniform over choices of $u \in U_{\text{ad}}$. Because of this, we will, at a minimum, assume that $K,G$ are bounded on their domains, and specify some further constraints as needed below. 

\tred{We have chosen the abstract cost functional \eqref{eq:optControl} to represent the preferences of a generic policy-maker. In applications, $\ell(u)$ represents the cost associated with implementing controls. This could represent cost of synthesizing and distributing vaccines, loss in GDP associated with implementing partial quarantines, excess spending on treatment efforts, or any number of other costs. The function $G: Q \to \R$ corresponds to running cost assigned to different states throughout the evolution. This would need to be chosen by the policy-maker based on their objectives and their assessment of what constitutes desirable outcomes. For example, one policy-maker may assign higher costs to states with higher number of infections, meaning that $G(x,t)$ is larger when $x_2 = I$ is larger, with the simple goal of lowering total number of infections. Alternatively, another policy-maker may have the goal of ``flattening the curve" so as to not overly burden the healthcare system. In this case, $G(x,t)$ may be zero for low values of $I$, but then discontinuously jump to some large value when $I$ crosses a threshold representing hospital capacity. This function would assign no cost to the mass of $f(x,t)$ which is below the threshold, but very high cost to any mass of $f(x,t)$ above the threshold. Likewise, $K:\Omega \to \R$ represents final cost, which assess cost based on the final distribution $f(x,T)$. This expresses the policy-maker's goals for distribution of states at the final time $T$. For example, if vaccination is in production but not yet available, it may be an express goal of the policy-maker to maintain a significantly large susceptible population by the time vaccines are deployed. In this case, one would like to steer the mass of $f(x,T)$ to states where $x_1 = S$ is large. Maximizing $S$ is equivalent to minimizing $-S$, so one may set $K(x)$ to be negative and decreasing in $S$ to accomplish this. All this to say, we choose an abstract functional because we would like to prove our results in the most generic case, but these functions directly express the policy-maker's preferences, and policy-makers who model these functions differently will have vastly different optimal intervention strategies.}

\begin{remark} \label{rem:controlspace}
    \emph{Our particular choice of control space $U_{\text{ad}}$ is quite important for three reasons. The first is a practical consideration: this control space allows for discontinuous control maps, which is especially important because for $L^1$ control problems, one needs to allow for the possibility of bang-bang controls. This gives this control space a natural advantage over, for example, $(H^1[0,T])^3$ which would only allow for continuous controls. The second is that $U_\text{ad}$ is closed under weak $L^2$ limits. This follows directly from the definition \eqref{eq:TV} since $v_k \rightharpoonup v$ in $L^2[0,T]$  means that inequalities for $\int^T_0 v_k(t)\varphi'(t)dt$ that are uniform over choices of  $k$ and $\varphi$ will be preserved in the limit. The third is that by the Helly selection theorem \cite[Thm. 5.5]{EvansMeasure}, any sequence in $U_{\text{ad}}$ admits a subsequence converging pointwise a.e. which allows us to pass limits into integrals using the dominated convergence theorem.}
\end{remark}

\section{Analysis of the Fokker-Planck control problem} \label{sec:analysis}

In this section, we provide some analysis of the Fokker-Planck control problem \eqref{eq:optControl}. Specifically, we give necessary conditions which characterize the optimal control maps via a version of the Pontryagin Minimum Principle (PMP) in the vein of \cite{Borzi1}. Before doing so, we prove existence of an optimal control by analyzing the control-to-state map. \tred{To streamline the exposition, some proofs have been moved to the appendix. This is true for any theorems or lemmas below which do not include a proof.}

\begin{theorem}
     \label{eq:opControlexistence}
    Assume that the control cost $\ell:R_U\to [0,\infty)$ is continuous and convex, and that $\sigma_S^2,\sigma_I^2$ are twice differentiable in $x$ and Lipschitz continuous in $u$ with a Lipschitz constant which is uniform over $x \in \Omega$. Then there exists an optimal control $u^* \in U_{\text{ad}}$ and corresponding solution $f^*$ of \eqref{eq:FPcontrolled} which solve \eqref{eq:optControl}.
\end{theorem}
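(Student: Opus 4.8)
The plan is to use the direct method of the calculus of variations. First I would take a minimizing sequence: since $J$ is bounded below on the admissible set (by the remarks following \eqref{eq:optControl}), let $\{u_n\} \subset U_{\text{ad}}$ be a sequence with $J(f_n, u_n) \to \inf J$, where each $f_n$ is the weak solution of \eqref{eq:FPcontrolled} corresponding to $u_n$ from Proposition~\ref{prop:FPexistence}. Because $U_{\text{ad}} \subset (L^\infty[0,T])^3$ with values confined to the fixed rectangle $R_U$, the sequence $\{u_n\}$ is bounded in $(L^2[0,T])^3$, so along a subsequence (not relabeled) $u_n \rightharpoonup u^*$ weakly in $(L^2[0,T])^3$. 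Since $U_{\text{ad}}$ is convex and closed in $(L^2[0,T])^3$, it is weakly closed, so $u^* \in U_{\text{ad}}$.

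Next I would extract a limit for the states. By the uniform $H^{2,1}(Q)$ bound from Remark~\ref{rem:regularity} (valid precisely because the $u_n$ are uniformly bounded in $(L^\infty[0,T])^3$), $\{f_n\}$ is bounded in $H^{2,1}(Q)$, hence along a further subsequence $f_n \rightharpoonup f^*$ weakly in $H^{2,1}(Q)$, and by Aubin--Lions (using that $\partial_t f_n$ is controlled in $L^2$ and $f_n$ is bounded in $H^2$ in space) we get strong convergence $f_n \to f^*$ in, say, $L^2(Q)$ and in $C([0,T]; L^2(\Omega))$, together with $\nabla f_n \to \nabla f^*$ strongly in $L^2(Q)$. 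The crucial step is then to pass to the limit in the weak formulation \eqref{eq:weakFP} to show that $f^*$ is the weak solution associated with $u^*$. This requires showing the nonlinear-in-$u$ coefficients converge appropriately: the coefficients $b_j$ and $c$ in \eqref{eq:FPdiv} involve $\bF(x,u_n)$, which is affine in $u$, and $\sigma_j^2(x,u_n)$ and its $x$-derivatives. For the drift terms, products like $\bF_j(x,u_n) f_n$ converge because $\bF$ is affine in $u$ (so $\bF_j(x,u_n) \rightharpoonup \bF_j(x,u^*)$ weakly while $f_n \to f^*$ strongly, and a weak-times-strong pairing passes to the limit when tested against smooth $\varphi$). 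For the diffusion terms, the Lipschitz-in-$u$ hypothesis on $\sigma_S^2, \sigma_I^2$ (uniform over $x$) is what makes $\sigma_j^2(\cdot, u_n) \to \sigma_j^2(\cdot, u^*)$ in an adequate sense — though here one must be careful, since $u_n$ converges only weakly, not strongly, so Lipschitz continuity alone does not give $\sigma_j^2(x,u_n) \to \sigma_j^2(x,u^*)$ pointwise.

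This last point is the main obstacle, and I expect it to require one of two fixes. One option is to invoke a compactness/Mazur-type argument: pass to convex combinations of the $u_n$ (which still minimize in the limit if one is careful, or use that the state map is suitably continuous along them), or more cleanly, observe that the terms entering the weak formulation that involve $\sigma_j^2$ are integrated against $f_n$ and $\nabla f_n$, and exploit the strong convergence of $f_n, \nabla f_n$ to reduce the requirement on $\sigma_j^2(x,u_n)$ to weak-$*$ convergence in $L^\infty(Q)$, which does hold along a subsequence by Banach--Alaoglu; one then needs to identify the weak-$*$ limit of $\sigma_j^2(x, u_n)$ with $\sigma_j^2(x, u^*)$, and this identification is exactly where additional structure (e.g., affinity or convexity of $\sigma_j^2$ in $u$, or the earlier-quoted constancy/simple forms of $\sigma_S, \sigma_I$ common in the literature) would be used. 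The paper's hypotheses mention only Lipschitz-in-$u$, so I would expect the authors either to assume the $\sigma_j^2$ are affine (or convex) in $u$ as well, or to work with a topology on controls that is strong enough; I would flag this in the writeup and proceed under whichever assumption the paper adopts.

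Finally, I would conclude by lower semicontinuity of the cost. Write $J(f,u) = \int_0^T \ell(u)\,dt + \int_\Omega K f(x,T)\,dx + \int_0^T\int_\Omega G f\,dx\,dt$. The second and third terms are linear in $f$ and, since $f_n \to f^*$ strongly (in $L^2(Q)$ and with $f_n(\cdot,T) \to f^*(\cdot,T)$ in $L^2(\Omega)$) and $K, G$ are bounded, these terms converge. The first term $\int_0^T \ell(u_n)\,dt$ is weakly lower semicontinuous in $u_n$ because $\ell$ is continuous and convex (so $u \mapsto \int_0^T \ell(u)\,dt$ is convex and strongly lower semicontinuous on $(L^2[0,T])^3$, hence weakly lower semicontinuous). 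Therefore
\[
J(f^*, u^*) \le \liminf_{n\to\infty} J(f_n, u_n) = \inf_{(f,u)} J,
\]
and since $(f^*, u^*)$ is admissible, it is a minimizer. This completes the proof.
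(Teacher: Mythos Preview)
Your proposal is correct and follows essentially the same direct-method argument as the paper: weak compactness of $\{u_n\}$ in $(L^2[0,T])^3$, uniform $H^{2,1}(Q)$ bounds on $\{f_n\}$, compact embedding to get strong $L^2(Q)$ convergence, passage to the limit in the weak formulation, and weak lower semicontinuity of the cost. The paper packages the middle steps as two explicit properties of the control-to-state map (weak closedness and weak--strong continuity, the latter via Urysohn's subsequence principle), but the substance is identical; notably, the concern you flag about identifying the weak limit of $\sigma_j^2(x,u_n)$ when $\sigma_j^2$ is merely Lipschitz in $u$ is legitimate and is glossed over in the paper's proof as well.
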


\begin{proof} The proof follows the general strategy of \cite[Corollary 1]{Antil}, which entails analysis of the control-to-state map. Here the control-to-state map \begin{equation} \label{eq:controlToState} u\in U_{\text{ad}} \subset (L^2[0,T])^3  \,\,\,\,\, \mapsto \,\,\,\,\, \mathcal S(u) = f \in \mathcal W\subset L^2(Q), \text{ the solution of } \eqref{eq:FPcontrolled} \end{equation} is well-defined by virtue of proposition~\ref{prop:FPexistence}. Note, we are considering $U_{\text{ad}}$ with the norm $$\|u\|_{(L^2[0,T])^3} = \left(\sum^3_{i=1} \|u_i\|^2_{L^2[0,T]}\right)^{1/2}.$$ We prove the following two properties: \begin{itemize}
    \item[(a)] $\mathcal S$ is weakly closed as a map from $U_{\text{ad}}\to L^2(Q).$ That is, if a sequence $\{u^{(k)}\}$ in $U_{\text{ad}}$ is such that $u^{(k)} \rightharpoonup u$ in $(L^2[0,T])^3$ and $\mS(u^{(k)}) \rightharpoonup f \in L^2(Q)$, then $u \in U_{\text{ad}}$ and $\mS(u) = f.$
    \item[(b)] $\mathcal S$ is weak-strong continuous as a map from $U_{\text{ad}} \to L^2(Q).$
\end{itemize} For (a), take a sequence $\{u^{(k)}\}$ in $U_{\text{ad}}$ such that $u^{(k)} \rightharpoonup u$ in $(L^2[0,T])^3$ and $\mS(u^{(k)}) \rightharpoonup f$ in $L^2(Q)$. That $u \in U_{\text{ad}}$ follows from the reasoning in remark~\ref{rem:controlspace}. By the Helly selection theorem, we can pass to a subsequence of $\{u^{(k)}\}$ which converges pointwise a.e. to $u$; we do so without renaming the sequence. Now $\{\mS(u^{(k)})\}$ is a bounded sequence in the Hilbert space $\mathcal W$, so it has a subsequence converging weakly to some $f \in \mathcal W$. On the other hand, $\mathcal W$ is compactly embedded in $L^2(Q)$ by the Aubin-Lions lemma \cite[Ch. III, Prop. 1.3]{Showalter} and hence, passing to a further subsequence if necessary, we can attain strong convergence to $f$ in $L^2(Q)$. By the uniform $L^\infty$ bounds on the coefficients $b_j,c$ in \eqref{eq:weakFP}, weak convergence of $\mS(u^{(k)})$ to $f$ in $\mathcal W$, strong convergence of $\mS(u^{(k)})$ to $f$ in $L^2(Q)$, and pointwise a.e. convergence of $u^{(k)}\to u$ (which is required for application of the dominated convergence theorem) we see from \eqref{eq:weakFP}, that $f$ must be a weak solution of \eqref{eq:FPcontrolled} corresponding to the control $u \in U_{\text{ad}}.$ By definition, $\mS(u)$ is the unique such weak solution, so $\mS(u)=f.$ This concludes the proof of (a). 

For (b), we use a simple application  of the Urysohn subsequence principle \cite[Ch. 2.1.17]{TaoMeasure}. Suppose that $\{u^{(k)}\}$ is a sequence converging weakly to $u \in U_{\text{ad}}.$ Again $\{\mS(u^{(k)})\}$ is a uniformly bounded sequence in $\mathcal W.$ Thus any subsequence $\{\mS(u^{(k_\ell)})\}$ is uniformly bounded in $\mathcal W$, and so there is a further subsequence $\{\mS(u^{k_{\ell_m}})\}$ converging strongly to some $f \in L^2(Q).$ By the closure guaranteed in (a), we must have $S(u)=f.$ Thus every subsequence of $\{\mS(u^{(k)})\}$ has a further subsequence converging strongly to $\mS(u)$, so Urysohn's principle implies that $\mS(u^{(k)}) \to \mS(u)$ and $\mS$ is weak-strong continuous from $U_{\text{ad}}\to L^2(Q)$. 

Finally, consider the reduced cost functional $\mathcal J(u) = J(\mS(u),u).$ Since the control-to-state operator is well-defined, problem \eqref{eq:optControl} is equivalent to \begin{equation}
    \label{eq:reducedControlProb} \min_{u \in U_{\text{ad}}} \mathcal J(u)
\end{equation} Now $\mathcal J(u)$ is bounded from below so $\inf_{u \in U_{\text{ad}}} \mathcal J(u) = J^*$ is finite. Take a minimizing sequence $\{u^{(k)}\}$ such that $\mathcal J(u^{(k)}) \to J^*.$ Since $\{u^{(k)}\}$ is a bounded sequence in $(L^2[0,T])^3$, we can pass to a subsequence which converges weakly to some $u^* \in U_{\text{ad}}$, and by weak-strong continuity proven in (b), we know that $\mS(u^{(k)}) \to \mS(u)$ strongly in $L^2(Q)$ along this subsequence. Now, $$\mathcal J(u^{(k)}) = \int^T_0 \ell(u^{(k)}(t))dt + \int_\Omega K(x)[\mS(u^{(k)})](x,T)dt + \int^T\int_\Omega G(x,t) [\mS(u^{(k)})](x,t)dxdt.$$ The conditions that $\ell$ is nonnegative, continuous and convex guarantee that the first term above is weakly lower-semicontinuous in $u$. The weak-strong continuity of $\mS(u)$ guarantees that the latter two terms are well-behaved in the limit as $k \to \infty$ since $K,G$ are bounded. Thus we see $$J^* \le \mathcal J(u^*) \le \liminf_{k\to\infty} \mathcal J(u^{(k)}) = J^*,$$ so indeed this $u^* \in U_{\text{ad}}$ and the corresponding $f^* \defeq \mS(u^*) \in \mathcal W$ are a solution of \eqref{eq:optControl}.
\end{proof}

We remark that we also have strong-strong continuity of the control-to-state map. While this is not enough to conclude existence of a solution to the optimal control problem, it does have bearing on the convergence of the numerical methods. 

\begin{lemma}
    \label{lem:strongstrong}
    The control to state map $\mS: U_{\text{ad}} \to \mathcal W$ is Lipschitz continuous.
\end{lemma}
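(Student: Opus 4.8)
The plan is to take a strongly convergent sequence $u^{(k)} \to u$ in $(L^2[0,T])^3$ and show $\mS(u^{(k)}) \to \mS(u)$ strongly in $H^{2,1}(Q)$. Write $f^{(k)} = \mS(u^{(k)})$, $f = \mS(u)$, and set $w^{(k)} = f^{(k)} - f$. Subtracting the weak (or divergence-form) formulation \eqref{eq:FPdiv} for $u^{(k)}$ from that for $u$, I obtain a linear parabolic equation for $w^{(k)}$ with the same principal part $\tfrac12 \sum_j \partial_{x_j}(\sigma_j^2(x,u^{(k)}) \partial_{x_j} \cdot)$ and with a right-hand side consisting of the differences of the lower-order coefficients acting on $f$, schematically $\sum_j \big(b_j(x,u) - b_j(x,u^{(k)})\big)\partial_{x_j} f + \big(c(x,u)-c(x,u^{(k)})\big)f$ plus the difference of the principal coefficients $\tfrac12\sum_j \partial_{x_j}\big((\sigma_j^2(x,u)-\sigma_j^2(x,u^{(k)}))\partial_{x_j} f\big)$, together with zero initial data. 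The $u$-Lipschitz hypothesis on $\sigma_S^2,\sigma_I^2$ (uniform in $x$), together with smoothness of $\bF$ and the fact that $b_j,c$ are built from $\bF$ and derivatives of $\sigma_j^2$, gives that all these coefficient differences are controlled in the appropriate norm by $\|u^{(k)}-u\|_{L^\infty}$ — and here is the first subtlety: weak $L^2$ convergence would not suffice, but we are assuming \emph{strong} $L^2$ convergence of $u^{(k)}$, which along a subsequence gives a.e. convergence and hence, by the Lipschitz bounds and dominated convergence, convergence of the coefficient differences in $L^2$ or $L^\infty$ of the relevant variables.

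The core estimate I would invoke is the uniform $H^{2,1}(Q)$ bound on solutions of \eqref{eq:FPcontrolled} from \cite[Ch. IV, Theorem 9.1]{Ladyzhenskaja} (as recorded in Remark \ref{rem:regularity}), applied to the equation solved by $w^{(k)}$. This yields
\[
\|w^{(k)}\|_{H^{2,1}(Q)} \le C \,\big\| \text{(right-hand side)} \big\|_{L^2(Q)},
\]
with $C$ uniform over the uniformly bounded family $\{u^{(k)}\}\subset U_{\text{ad}}$. Since $f = \mS(u) \in H^{2,1}(Q)$ has $\partial_{x_j} f \in L^2(Q)$ and (by the embedding/interpolation for $H^{2,1}$) also controlled mixed terms, the right-hand side is bounded by a constant times $\max_j\|b_j(\cdot,u)-b_j(\cdot,u^{(k)})\|$ and $\|c(\cdot,u)-c(\cdot,u^{(k)})\|$ and $\max_j\|\sigma_j^2(\cdot,u)-\sigma_j^2(\cdot,u^{(k)})\|$, each multiplied by fixed norms of $f$ and its derivatives. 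By the previous paragraph these coefficient differences tend to $0$, so $\|w^{(k)}\|_{H^{2,1}(Q)} \to 0$.

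There is one genuine technical point to handle carefully: the difference-of-principal-parts term $\tfrac12\sum_j\partial_{x_j}\big((\sigma_j^2(x,u)-\sigma_j^2(x,u^{(k)}))\partial_{x_j}f\big)$ is in divergence form and is not obviously an $L^2(Q)$ source term unless one either (i) uses that $\sigma_j^2$ is $C^2$ in $x$ so the product rule puts it in non-divergence form with coefficients $\nabla_x(\sigma_j^2(x,u)-\sigma_j^2(x,u^{(k)}))$ and $\sigma_j^2(x,u)-\sigma_j^2(x,u^{(k)})$ acting on $\partial^2_{x_j} f \in L^2(Q)$ — valid since $f\in H^{2,1}(Q)$ — or (ii) keeps it in divergence form and applies the $H^{1,1}$ energy estimate to absorb it, giving convergence in $H^{1,1}(Q)$ and then bootstrapping. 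I would take route (i): writing everything in non-divergence form, the whole right-hand side lies in $L^2(Q)$ with norm $o(1)$, and the Ladyzhenskaja estimate closes the argument. The main obstacle is thus making this coefficient-difference bookkeeping precise — in particular verifying that the $u$-Lipschitz (uniform in $x$) hypothesis transfers to Lipschitz/continuity bounds on $\nabla_x\sigma_j^2$ and on $b_j,c$ in $u$ — but given the assumed regularity ($\sigma_S^2,\sigma_I^2$ twice differentiable in $x$, $\bF$ smooth) this is a routine, if slightly tedious, verification. Finally, since every subsequence of $\{f^{(k)}\}$ has a further subsequence converging to $f=\mS(u)$ in $H^{2,1}(Q)$ by the above, the Urysohn subsequence principle \cite[Ch. 2.1.17]{TaoMeasure} gives $f^{(k)}\to f$ in $H^{2,1}(Q)$ for the full sequence, completing the proof. $\BOOM$
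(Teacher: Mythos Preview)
Your approach is correct and follows the same core strategy as the paper: derive a linear parabolic equation for the difference $w^{(k)}=\mS(u^{(k)})-\mS(u)$ with a source term built from the coefficient differences, then invoke the $H^{2,1}$ estimate from \cite[Ch.~IV, Theorem~9.1]{Ladyzhenskaja}. The paper's execution is more direct, however. Rather than passing through subsequences, a.e.\ convergence, dominated convergence, and the Urysohn principle, it simply observes that $\bF$ is affine in $u$ and $\sigma_j^2$ is Lipschitz in $u$ uniformly in $x$, so the source term $\Phi$ in the difference equation satisfies $\|\Phi\|_{L^2(Q)}\le C\|u-\overline u\|_{(L^2[0,T])^3}$ outright; the parabolic estimate then gives
\[
\|\mS(u)-\mS(\overline u)\|_{H^{2,1}(Q)}\le C\|u-\overline u\|_{(L^2[0,T])^3},
\]
i.e.\ Lipschitz continuity, which is strictly stronger than what you prove and makes the subsequence machinery unnecessary. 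Your route~(i) and the non-divergence bookkeeping you sketch are exactly what is needed; the only simplification you miss is that the pointwise-in-$t$ Lipschitz bound $|\sigma_j^2(x,u(t))-\sigma_j^2(x,u^{(k)}(t))|\le L|u(t)-u^{(k)}(t)|$ (and the analogous affine bound for $\bF$), paired with the uniform $H^{2,1}$ bound on the state, already controls the $L^2(Q)$ norm of the source by $\|u-u^{(k)}\|_{(L^2[0,T])^3}$, so there is no need to fall back on $\|u^{(k)}-u\|_{L^\infty}$ at all.
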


In order to characterize this optimal pair, for functions $f,q:Q \to \R$ and controls $u \in U_{\text{ad}}$, we formally define the Lagrangian for our problem as \begin{equation} \label{eq:lagrangian1}
    L(f,q,u) = J(f,u) + \int^T_0 \int_\Omega q\Big(\partial_t f +\nabla\cdot\big(\B F(x,u) f\big) - \frac 1 2 \sum^2_{j=1} \frac{\partial^2}{\partial x_j^2}(\sigma_j(x,u)^2 f) \Big) dxdt
\end{equation} where we have suppressed the dependence of $f,q$ on $(x,t)$ and the dependence of $u$ on $t$ for brevity.

Writing out $J(f,u)$ and integrating by parts, we see \begin{equation} \label{eq:lagrangian2} \begin{split}
    L(f,q,u) &=  \int^T_0 \bigg(\ell(u) + \int_{\Omega} f\Big(-\partial_tq - \bF(x,u) \cdot \nabla q - \frac 1 2 \sum_{j=1}^2 \sigma_j(x,u)^2\frac{\partial^2q}{\partial x_j^2} +G(x,t) \Big)dx \bigg)\\
    &\hspace{1cm} \int_\Omega [K(x)+q(x,T)]f(x,T) - f_0(x)q(x,0) dx.
    \end{split}
\end{equation}

This motivates the definition of the adjoint equation of \eqref{eq:FPcontrolled} for our problem:   \begin{equation}\label{eq:adjFPcontrolled} \scriptstyle
\begin{aligned}
&-\partial_t q(x,t) -  \bF(x, u)\,\cdot \nabla q(x,t) - 
\frac{1}{2}\sum_{j=1}^2 \sigma_j^2(x,u)\frac{\partial^2q}{\partial x_j^2}(x,t) + G(x,t) = 0, \quad \mbox{in } \Omega \times(0,T)\\
&q(x,T) = -K(x), \quad \mbox{in } \Omega,
\end{aligned}
\end{equation} along with a zero flux boundary condition $(D\nabla q) \cdot \hat n = 0$ on $\partial \Omega \times (0,T)$ where $D$ is a diagonal matrix with entries $\sigma_j^2$.

Again, existence, uniqueness and some regularity for \eqref{eq:adjFPcontrolled} follows from \cite[Ch. IV, Theorem 9.1]{Ladyzhenskaja} given some mild assumptions on $G(x,t),K(x)$ (in addition to the assumptions we have already imposed on $\sigma_S,\sigma_I$).

\begin{proposition}
Assume that $K \in L^2(\Omega)$ and $G \in L^2(Q)$. Then for a fixed control map $u \in U_{\text{ad}}$, there exists a unique solution $q \in L^2(0,T;H^1(\Omega))$ of \eqref{eq:adjFPcontrolled}.
\end{proposition}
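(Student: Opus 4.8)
The plan is to reverse time to turn the terminal-value problem \eqref{eq:adjFPcontrolled} into a forward parabolic initial–boundary value problem, put it in divergence form, and then invoke the same linear parabolic theory used to justify Proposition~\ref{prop:FPexistence}.

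First I would set $s = T-t$ and $p(x,s) \defeq q(x,T-s)$, so that $\partial_s p(x,s) = -\partial_t q(x,T-s)$ while $\nabla p$ and $\partial^2_{x_j}p$ agree with the corresponding derivatives of $q$ at $t = T-s$. Then \eqref{eq:adjFPcontrolled} is equivalent to
\[
\partial_s p - \bF(x,u)\cdot\nabla p - \frac12\sum_{j=1}^2 \sigma_j^2(x,u)\,\frac{\partial^2 p}{\partial x_j^2} + \widetilde G(x,s) = 0 \quad \text{in } \Omega\times(0,T),
\]
with initial datum $p(\cdot,0) = -K$ in $\Omega$, zero-flux boundary condition $\nabla p\cdot\hat n = 0$ on $\partial\Omega\times(0,T)$, and $\widetilde G(x,s)\defeq G(x,T-s)$; note that $\widetilde G\in L^2(Q)$ precisely because $G\in L^2(Q)$, and $-K\in H^1(\Omega)$ by hypothesis. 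Since $\sigma_j^2$ is twice continuously differentiable in $x$, I would rewrite $\frac12\sigma_j^2\,\partial^2_{x_j}p = \frac12\partial_{x_j}(\sigma_j^2\,\partial_{x_j}p) - \frac12(\partial_{x_j}\sigma_j^2)(\partial_{x_j}p)$ and absorb the extra term into the first-order coefficient, casting the equation into the divergence form of \eqref{eq:FPdiv}: diffusion coefficients $\tfrac12\sigma_j^2$, no zeroth-order term, source $-\widetilde G$, and first-order coefficients that are bounded and continuous on $\overline{\Omega}$ (because $\bF(\cdot,u)$ is affine in the bounded variable $u$ and smooth in $x$, and $\partial_{x_j}\sigma_j^2$ is continuous on $\overline{\Omega}$).

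With the problem in this form, the hypotheses of \cite[Ch. IV, Theorem 9.1]{Ladyzhenskaja} are satisfied: $\Omega$ is bounded with Lipschitz boundary, the equation is uniformly parabolic under the standing assumptions on $\sigma_S,\sigma_I$ (exactly as for \eqref{eq:FPcontrolled}), the coefficients are bounded, the source lies in $L^2(Q)$, the boundary condition is of conormal type, and the initial datum lies in $H^1(\Omega)$. That theorem yields a unique $p\in H^{2,1}(Q)$, and then $q(x,t)\defeq p(x,T-t)$ lies in $H^{2,1}(Q)$ (time reversal is an isometry of $H^{2,1}(Q)$ onto itself) and is the unique weak solution of \eqref{eq:adjFPcontrolled}, uniqueness transferring because the correspondence $q\leftrightarrow p$ is a bijection between solutions of the two problems.

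I do not expect a genuine obstacle here: after time reversal \eqref{eq:adjFPcontrolled} is a standard linear parabolic problem of the same type as \eqref{eq:FPcontrolled}. The only points requiring care are that the adjoint operator as written is not in divergence form and that the transport term $\bF\cdot\nabla q$ naively carries the wrong sign for a forward equation — both are remedied by the time reversal together with the rewriting of the second-order term — and that one should confirm the zero-flux boundary condition for $q$ is exactly the conormal boundary condition associated with the divergence-form operator, so that the cited theorem applies verbatim.
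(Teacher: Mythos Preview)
Your proposal is correct and matches the paper's approach: the paper does not give a proof at all but simply asserts that existence, uniqueness, and regularity follow from \cite[Ch.~IV, Theorem~9.1]{Ladyzhenskaja}, and your argument (time-reverse to a forward problem, rewrite in divergence form, invoke that theorem) is exactly the standard reduction one performs to apply that citation. You have supplied the details the paper omits.
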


Using similar formality as with the Lagrangian \eqref{eq:lagrangian1}, we now define the Pontryagin Hamiltonian function {\small\begin{equation}
    \label{eq:Hamiltonian} 
    H(t,f(\cdot,t),q(\cdot,t),u(t)) = \ell(u(t)) - \int_\Omega \Big( f(\cdot,t)\B F(\cdot,u(t))\cdot \nabla q(\cdot,t) + \frac 1 2 \sum^2_{j=1} f(\cdot,t) \sigma_j(\cdot,u(t))^2\frac{\partial^2q}{\partial x_j^2}(\cdot,t) \Big)dx
\end{equation}} for $f,q:Q\to\R$ and $u \in U_{\text{ad}}.$ Here we have suppressed the dependence of $f,q$ on $x$ but maintain the dependence on $t$ to emphasize that this Hamiltonian does depend on time.  

We want to prove that the optimal controls can be characterized as a pointwise (in time) minimizer of this Hamiltonian, rather than in terms of a variational inequality which is typical for general controlled partial differential equations; see \cite[Ch. 9]{Manzoni}, \cite[Ch. 5]{Tro}. To this end, we prove a lemma expressing the difference of the cost functional evaluated on different controls explicitly in terms of the Hamiltonian \eqref{eq:Hamiltonian}. 

\begin{lemma} \label{lem:hamiltonianRelationship}
    For any two control maps $u, \overline u \in U_{\text{ad}}$ with respective solutions $f$ and $\overline f$ of \eqref{eq:FPcontrolled}, we have $$J(f,u) - J(\overline f,\overline u) = \int^T_0 \Big[H(t,\overline f(\cdot,t),q(\cdot,t),u(t)) - H(t,\overline f(\cdot,t),q(\cdot,t),\overline u(t)) \Big]dt$$ where $q$ is the solution of \eqref{eq:adjFPcontrolled} corresponding to control $u$.  
\end{lemma}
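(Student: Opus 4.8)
The plan is to compute $J(f,u) - J(\overline f, \overline u)$ directly, using the adjoint equation to rewrite the terms involving $K$ and $G$ in terms of the Hamiltonian. The natural starting point is the observation that, by Proposition~\ref{prop:FPexistence}, both $f$ and $\overline f$ are mass-normalized, so the control-cost terms $\int_0^T \ell(u)\,dt$ and $\int_0^T \ell(\overline u)\,dt$ are already exactly the control-cost parts of $H(t,\overline f,q,u)$ and $H(t,\overline f,q,\overline u)$ respectively. Hence it suffices to show that
\begin{equation}
\int_\Omega K(x)\big(f(x,T)-\overline f(x,T)\big)\,dx + \int_0^T\!\!\int_\Omega G(x,t)\big(f-\overline f\big)\,dx\,dt
\end{equation}
equals the remaining (integral-over-$\Omega$) part of the Hamiltonian difference, namely $-\int_0^T\!\int_\Omega \overline f \big(\bF(x,u)-\bF(x,\overline u)\big)\cdot\nabla q\,dx\,dt - \tfrac12\sum_j\int_0^T\!\int_\Omega \overline f\big(\sigma_j(x,u)^2-\sigma_j(x,\overline u)^2\big)\partial_{x_j}^2 q\,dx\,dt$.

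First I would test the adjoint equation \eqref{eq:adjFPcontrolled} (the one with control $u$) against $z = f - \overline f$ and integrate over $Q$. Using $q(\cdot,T) = -K$, integration by parts in $t$ produces the boundary term $\int_\Omega K(x)z(x,T)\,dx$ (the $t=0$ term vanishes since $z(\cdot,0)=0$), and the $G$-term produces $\int_0^T\!\int_\Omega G z\,dx\,dt$. So the left side above is $\int_0^T\!\int_\Omega\big(\partial_t q + \bF(x,u)\cdot\nabla q + \tfrac12\sum_j\sigma_j(x,u)^2\partial_{x_j}^2 q\big)z\,dx\,dt$ after moving the time-derivative term appropriately — more precisely, testing the adjoint PDE against $z$ gives $\int_0^T\!\int_\Omega\big(-\partial_t q - \bF(x,u)\cdot\nabla q - \tfrac12\sum_j\sigma_j^2\partial_{x_j}^2 q + G\big)z = 0$, and I would integrate the $-\partial_t q\cdot z$ term by parts in $t$ and the spatial terms by parts in $x$ (using the zero-flux boundary conditions for both $q$ and $z$) to shift all derivatives onto $z$, obtaining $\int_0^T\!\int_\Omega q\big(-\partial_t z - \nabla\cdot(\bF(x,u)z) + \tfrac12\sum_j\partial_{x_j}^2(\sigma_j(x,u)^2 z)\big)\,dx\,dt$ plus the $K$ and $G$ boundary/source contributions.

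The next step is to identify $-\partial_t z - \nabla\cdot(\bF(x,u)z) + \tfrac12\sum_j\partial_{x_j}^2(\sigma_j(x,u)^2 z)$ using the PDE for $z$. This is exactly equation \eqref{eq:z} from the proof of Lemma~\ref{lem:strongstrong}: $z$ solves the $u$-Fokker–Planck equation with right-hand side $\Phi(x,u,\overline u,t) = \nabla\cdot\big((\bF(x,\overline u)-\bF(x,u))\overline f\big) - \tfrac12\sum_j\partial_{x_j}^2\big((\sigma_j(x,\overline u)^2-\sigma_j(x,u)^2)\overline f\big)$. Substituting, the quantity above becomes $-\Phi$, so the whole integral is $-\int_0^T\!\int_\Omega q\,\Phi\,dx\,dt$; integrating this by parts once more in $x$ (again killing boundary terms via zero-flux/compact-support-type conditions, or simply noting $\overline f$ satisfies its own zero-flux condition) moves the derivatives off of $\overline f$ and onto $q$, yielding precisely $\int_0^T\!\int_\Omega \overline f\big(\bF(x,u)-\bF(x,\overline u)\big)\cdot\nabla q\,dx\,dt + \tfrac12\sum_j\int_0^T\!\int_\Omega \overline f\big(\sigma_j(x,u)^2 - \sigma_j(x,\overline u)^2\big)\partial_{x_j}^2 q\,dx\,dt$, which matches (up to the sign already accounted for) the Hamiltonian difference. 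Assembling the pieces gives the claim.

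The main obstacle I anticipate is purely a matter of rigor in the integrations by parts: the formal manipulations above are straightforward, but $z$, $f$, $\overline f$, $q$ are only $H^{2,1}(Q)$ weak solutions, so one must check that each integration by parts is licensed at that regularity — in particular that the spatial boundary terms genuinely vanish under the zero-flux conditions \eqref{eq:flux} and $\nabla q\cdot\hat n = 0$, and that the temporal boundary term at $t=0$ vanishes because $z(\cdot,0)=0$ in the appropriate trace sense. I would handle this by citing the weak formulation \eqref{eq:weakFP} and the $H^{2,1}$ regularity/trace theory from \cite{Ladyzhenskaja,WYW}, or alternatively by carrying out the whole computation first for smooth data and then passing to the limit using the continuity of the control-to-state and control-to-adjoint maps. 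A secondary bookkeeping point is being careful that the $q$ appearing throughout is the adjoint state for $u$ (not $\overline u$), which is why the asymmetry in the statement — $H(t,\overline f,q,u) - H(t,\overline f,q,\overline u)$ with the "barred" state $\overline f$ but "unbarred" adjoint $q$ — comes out correctly; this is exactly what makes the telescoping work and should be flagged explicitly.
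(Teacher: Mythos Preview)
Your proposal is correct and follows essentially the same approach as the paper: substitute the adjoint equation for the $G$-term, integrate by parts in $t$ and $x$ to shift derivatives onto $z=f-\overline f$, then use the Fokker--Planck equations satisfied by $f$ and $\overline f$ (equivalently, the equation \eqref{eq:z} for $z$) and one more spatial integration by parts to recover the Hamiltonian difference. The only differences are organizational---you package the computation using the $\Phi$ notation from Lemma~\ref{lem:strongstrong}, whereas the paper substitutes $\partial_t f$ and $\partial_t \overline f$ separately---and you are somewhat more explicit than the paper about the regularity issues underlying the integrations by parts.
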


With this, we prove a version of the PMP for the Fokker-Planck control problem \eqref{eq:optControl}.

\begin{theorem}
    Let $(f^*,u^*)$ be a solution of \eqref{eq:optControl}, and let $q^*$ be the solution of the corresponding adjoint equation \eqref{eq:adjFPcontrolled}. Then \begin{equation}
        \label{eq:PMP}
        H(t,f^*(\cdot,t), q^*(\cdot,t),u^*(t)) = \min_{w \in R_U} H(t,f^*(\cdot,t), q^*(\cdot,t),w).
    \end{equation} for almost all $t \in [0,T].$
\end{theorem}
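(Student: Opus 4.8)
The plan is to prove \eqref{eq:PMP} by a needle (spike) variation of the optimal control, using Lemma~\ref{lem:hamiltonianRelationship}, which is tailored for exactly this purpose. Fix $w\in R_U$. First I would check that
\[ \Psi_w(t) \defeq H(t,f^*(\cdot,t),q^*(\cdot,t),w) - H(t,f^*(\cdot,t),q^*(\cdot,t),u^*(t)) \]
defines an $L^1[0,T]$ function; this follows because $\ell$ is bounded on $R_U$, $\bF$ and $\sigma_j^2$ are bounded on $\overline{\Omega}\times R_U$, $f^*\in C([0,T];L^2(\Omega))$ and $q^*\in L^2(0,T;H^2(\Omega))$, the latter two being consequences of $f^*,q^*\in H^{2,1}(Q)$. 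Then I would fix a Lebesgue point $\tau\in(0,T)$ of $\Psi_w$ and, for small $\rho>0$, take an interval $E_\rho\subset[0,T]$ of length $\rho$ containing $\tau$ and define $u_\rho\in U_{\text{ad}}$ by $u_\rho=w$ on $E_\rho$ and $u_\rho=u^*$ off $E_\rho$, with corresponding state $f_\rho\defeq\mS(u_\rho)$.

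Next I would apply Lemma~\ref{lem:hamiltonianRelationship} with the \emph{optimal} control in the first slot, i.e.\ with $u=u^*$ and $\overline u=u_\rho$, so that the adjoint appearing is the fixed $q^*$ --- this choice is crucial, see below. Optimality of $(f^*,u^*)$ forces $J(f^*,u^*)-J(f_\rho,u_\rho)\le0$, and the integrand in the lemma's identity vanishes off $E_\rho$ since there $u_\rho=u^*$; rearranging and dividing by $\rho=|E_\rho|$ gives
\[ \frac{1}{\rho}\int_{E_\rho}\big[H(t,f_\rho(\cdot,t),q^*(\cdot,t),w)-H(t,f_\rho(\cdot,t),q^*(\cdot,t),u^*(t))\big]\,dt \ \ge\ 0 . \]
I would then replace $f_\rho$ by $f^*$ at the cost of an error $\mathcal E_\rho$. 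Since the $\ell$-terms cancel, $|H(t,g,q^*,v)-H(t,h,q^*,v)|\le C\,\|(g-h)(\cdot,t)\|_{L^2(\Omega)}\,\|q^*(\cdot,t)\|_{H^2(\Omega)}$ uniformly in $v\in R_U$, with $C$ depending only on the sup-bounds of $\bF$ and the $\sigma_j^2$. Lemma~\ref{lem:strongstrong} together with $|E_\rho|=\rho$ yields $\|f_\rho-f^*\|_{H^{2,1}(Q)}\le C\|u_\rho-u^*\|_{(L^2[0,T])^3}\le C\rho^{1/2}$, and the classical embedding $H^{2,1}(Q)\hookrightarrow C([0,T];L^2(\Omega))$ upgrades this to the \emph{time-uniform} bound $\|(f_\rho-f^*)(\cdot,t)\|_{L^2(\Omega)}\le C\rho^{1/2}$. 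Hence $|\mathcal E_\rho|\le C\rho^{-1/2}\int_{E_\rho}\|q^*(\cdot,t)\|_{H^2}\,dt\le C\big(\int_{E_\rho}\|q^*(\cdot,t)\|_{H^2}^2\,dt\big)^{1/2}\to0$ by absolute continuity of the integral. Since $\tau$ is a Lebesgue point of $\Psi_w$, letting $\rho\to0$ leaves $\Psi_w(\tau)\ge0$, i.e.\ $H(\tau,f^*(\cdot,\tau),q^*(\cdot,\tau),u^*(\tau))\le H(\tau,f^*(\cdot,\tau),q^*(\cdot,\tau),w)$ for a.e.\ $\tau$.

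Finally, the exceptional null set depends on $w$, so I would remove this dependence by density: take a countable dense $D\subset R_U$, discard the countably many associated null sets together with $\{\tau:u^*(\tau)\notin R_U\}$, and observe that off the resulting null set the inequality holds for every $w\in D$, hence for every $w\in R_U$ because $w\mapsto H(\tau,f^*(\cdot,\tau),q^*(\cdot,\tau),w)$ is continuous on $R_U$ --- by continuity of $\ell$ and of $\bF(x,\cdot),\sigma_j^2(x,\cdot)$ together with dominated convergence using the uniform bounds. As $u^*(\tau)\in R_U$ off this set, this is exactly \eqref{eq:PMP} for a.e.\ $\tau$. The hard part will be the limit $\mathcal E_\rho\to0$: one must control the state-dependence of $H$, which carries the second spatial derivatives of $q^*$, uniformly in $t$ near $\tau$. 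This is precisely why $u^*$ must occupy the first slot of Lemma~\ref{lem:hamiltonianRelationship} --- so those derivatives live on the fixed $q^*$ while only $f_\rho$ moves with $\rho$; placing the perturbed control there instead would require a control-to-adjoint continuity estimate strong enough in $t$ to absorb $\partial_{x_j}^2 q_\rho$, forcing one first to integrate $H$ by parts in $x$ (harmless for a rectangular $\Omega$, less clean in general).
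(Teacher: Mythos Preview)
Your proof is correct and complete; in fact you are more careful than the paper about the dependence of the null set on $w$, which the paper silently ignores. However, you take a genuinely different route from the paper in how you apply Lemma~\ref{lem:hamiltonianRelationship}. You place $u^*$ in the first slot and $u_\rho$ in the second, so that the adjoint appearing is the fixed $q^*$ while the state $f_\rho$ varies; your error term then lives on $f_\rho-f^*$ and is killed by Lemma~\ref{lem:strongstrong} together with the embedding $H^{2,1}(Q)\hookrightarrow C([0,T];L^2(\Omega))$. The paper does precisely what you warn against in your final paragraph: it puts the \emph{perturbed} control in the first slot, so that $f^*$ is fixed in the Hamiltonian while the adjoint $q_k$ varies with $k$. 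The resulting error terms carry $\nabla(q_k-q^*)$ and $\partial_{x_j}^2(q_k-q^*)$, and the paper handles them exactly as you predicted one would have to --- by integrating by parts to throw the derivatives onto $f^*\in H^{2,1}(Q)$ and then invoking an external result (\cite[\S3, Theorem~5]{Borzi1}) that $\|q_k-q^*\|_{L^\infty(Q)}\to 0$. Your approach has the advantage of being fully self-contained within the paper's own lemmas and of avoiding the integration-by-parts step (and the associated boundary-term bookkeeping); the paper's approach has the minor advantage that no rate on the state perturbation is needed, only qualitative $L^\infty$ convergence of the adjoints.
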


\begin{proof}
    The proof uses a so-called needle variation of the optimal control map $u^* \in U_{\text{ad}}$. Taking any admissible control action $w \in R_U$ and any $t_0 \in [0,T]$, we let $B_k(t_0)$ be a sequence of balls centered at $t_0$ such that $\tred{\abs{B_k(t_0)}} \to 0$ as $k\to \infty$, and define \begin{equation}
    \label{eq:needleVariation}
    u_k(t) = \begin{cases}
        u^*(t), & t \in [0,T] \setminus B_k(t_0), \\ w, & t \in [0,T] \cap B_k(t_0).
    \end{cases}
\end{equation} We let $f_k$ be the solution of \eqref{eq:FPcontrolled} and $q_k$ be the solution of \eqref{eq:adjFPcontrolled} corresponding to $u_k$. Then by lemma \ref{lem:hamiltonianRelationship}, we see  \begin{equation} \label{eq:thm11}
    \begin{split}
       0 \le \frac{J(f_k,u_k) - J(f^*,u^*)}{\abs{B_k(t_0)}} &= \frac{1}{\abs{B_k(t_0)}}\int^T_0 \Big[H(t,f^*, q_k,u_k) - H(t,f^*, q_k, u^*)\Big]dx\,dt\\
       &= \frac{1}{\abs{B_k(t_0)}} \int_{B_k(t_0)}  \Big[H(t,f^*, q_k,w) - H(t,f^*, q_k, u^*)\Big] dt \\
       &= \frac{1}{\abs{B_k(t_0)}} \int_{B_k(t_0)} \Big[H(t,f^*, q^*,w) - H(t,f^*, q^*, u^*)\Big] dt  \\
       &\hspace{1cm}+\frac{1}{\abs{B_k(t_0)}} \int_{B_k(t_0)}\int_\Omega f^* \B F(x,u^*)\cdot \nabla(q_k -q^*) \,dx\,dt\\
       &\hspace{1cm}+ \frac{1}{\abs{B_k(t_0)}}\int_{B_k(t_0)}\int_\Omega \frac 1 2\sum^2_{j=1}f^*\sigma_j(x,u^*)^2\frac{\partial^2}{\partial x_j^2}(q_k -q^*) \, dx\,dt \\
       &\hspace{1cm}+\frac{1}{\abs{B_k(t_0)}} \int_{B_k(t_0)}\int_\Omega f^* \B F(x,w)\cdot \nabla(q_k -q^*) \,dx\,dt\\
       &\hspace{1cm}+ \frac{1}{\abs{B_k(t_0)}}\int_{B_k(t_0)}\int_\Omega \frac 1 2\sum^2_{j=1}f^*\sigma_j(x,w)^2\frac{\partial^2}{\partial x_j^2}(q_k -q^*) \, dx\,dt.
    \end{split}
\end{equation} After integrating by parts to pass a derivative onto $f^* \in \mathcal W$ and invoking the smoothness of $\B F(x,u), \bS(x,u)$ as functions of $x$, we see that the last four lines of \eqref{eq:thm11} go to zero as $k\to\infty$ by the strong continuity of the control-to-state operator proven in Lemma \ref{lem:strongstrong}. 

Finally, since $f^*,q^*$ are sufficiently regular, the map $t \mapsto [H(t,f^*, q^*,w) - H(t,f^*, q^*, u^*)]$ is in $L^1[0,T]$ and thus by the Lebesgue differentiation theorem \cite[Theorem 7.10]{Rudin}, we can take the limit as $k\to\infty$ in \eqref{eq:thm11} and conclude that for almost every $t \in [0,T]$,  $$0 \le H(t,f^*(\cdot,t),q^*(\cdot,t),w) - H(t,f^*(\cdot,t),q^*(\cdot,t),u^*(t)).$$ Since $w \in R_U$ was arbitrary, this proves \eqref{eq:PMP}.
\end{proof} 

Having proven this, we state the full necessary optimality conditions as a corollary.

\begin{corollary}[Necessary Conditions for Optimality] \label{cor31}
    If $(f,u)$ is a solution of the Fokker-Planck control problem \eqref{eq:optControl}, then there exists $q$ such that the following optimality system holds: \begin{equation}  
        \label{eq:forwardFP} \tag{FP}
        \left\{\begin{split}
            &\partial_t f(x,t) + \nabla \cdot \big(\bF(x, u)\,f(x,t)\big) = 
\frac{1}{2}\sum_{j=1}^2 \frac{\partial^2}{\partial x_j^2}\big(\sigma_j^2(x,u)\,f(x,t)\big),\quad \mbox{in } \Omega \times(0,T],\\
&f(x,0) = f_0(x), \quad \mbox{in } \Omega, \,\,\,\,\,\,\, \mathcal F \cdot \hat n = 0, \quad \mbox{on } \partial \Omega \, \text{ for } \mathcal F  \text{ as defined in \eqref{eq:flux}},
        \end{split}\right.
    \end{equation} 
    \begin{equation}  
        \label{eq:adjFP} \tag{ADJ}
        \left\{\begin{split}
            -&\partial_t q(x,t) -  \bF(x, u)\,\cdot \nabla q(x,t) = 
\frac{1}{2}\sum_{j=1}^2 \sigma_j^2(x,u)\frac{\partial^2q}{\partial x_j^2}(x,t) - G(x,t), \quad \mbox{in } \Omega \times[0,T)\\
&q(x,T) = -K(x), \quad \mbox{in } \Omega, \,\,\,\,\,\,\, \nabla q\cdot \hat n = 0, \quad \mbox{on } \partial \Omega,
        \end{split}\right.
    \end{equation} 
    \begin{equation}
        \label{eq:PMP2} \tag{PMP}
        H(t,f(\cdot,t), q(\cdot,t),u(t)) = \min_{w \in R_U} H(t,f(\cdot,t), q(\cdot,t),w). \hspace{4.5cm}
    \end{equation}
\end{corollary}

\tred{For some final notes, the authors of \cite{Borzi1} point out an interesting connection between \eqref{eq:adjFP} and the classical Hamilton-Jacobi-Bellman equation \cite[Ch. VI, \S4]{FlemingRishel} arising from application of the dynamic programming principle directly to control of \eqref{eq:dynamics}. In short, \eqref{eq:adjFP} is the Hamilton-Jacobi-Bellman equation corresponding to the dynamics \eqref{eq:dynamics} given an appropriately defined cost functional. Lastly, corollary~\ref{cor31} is analogous to the classical Pontryagin Maximum/Minimum Principle (PMP), as well as its stochastic counterpart. Generally, when analyzing control of a specific type of equation, the PMP involves the forward state equation, a similar backward adjoint equation, and an optimality condition. This applies in the classical setting or control of ordinary or stochastic differential equations \cite{FlemingRishel}, control of Vlasov-McKean dynamics \cite{carmonaFBSDE}, and in control of PDEs as seen here. In particular, since our state equation is a forward diffusive PDE, our adjoint equation is a backward diffusive PDE. } 

\section{Computational methods} \label{sec:computation}

In this section, we describe the numerical methods used to solve the FP control problem \eqref{eq:optControl}. This includes finite difference schemes for solving the forward and adjoint FP equations, \eqref{eq:forwardFP} and \eqref{eq:adjFP} respectively, and a brief exposition of the sequential quadratic Hamiltonian (SQH) method for resolving the optimal control maps. A fuller exposition of the SQH method with several examples can be found in \cite{SQH}.

To solve for the forward and adjoint Liouville equations \eqref{eq:forwardFP} and \eqref{eq:adjFP}, we use a strong stability preserving Runge-Kutta time discretization scheme coupled with the Chang-Cooper spatial discretization scheme and also implement a Strang splitting method. The details of the complete scheme can be found in \cite{Roy2024_2,FPbio2}.

The sequential quadratic Hamiltonian (SQH) method is an iterative method of resolving the optimal control for \eqref{eq:optControl} by using successive quadratic perturbations, which act as approximations to the true Hamiltonian \eqref{eq:Hamiltonian}. Pseudocode for the method is detailed in algorithm~\ref{algSQHmethod}, and we describe it here. For a penalization parameter $\eps>0$, we formally define the quadratically augmented Hamiltonian by \begin{equation} \label{eq:augmented_Hamiltonian}
    H_\eps(t,f(\cdot,t),q(\cdot,t),u(t),\tilde u(t)) = \tred{H(t,f(\cdot,t),q(\cdot,t), u(t))} + \eps\abs{u(t)-\tilde u(t)}^2_2
\end{equation} where $H$ is the Hamiltonian defined in \eqref{eq:Hamiltonian}, $f,q$ are functions on $Q$ and $u,\tilde u \in U_{\text{ad}}.$  We will use this augmented Hamiltonian while adaptively adjusting the parameter $\eps$ at each iteration of the SQH process. Specifically, $\eps$ is increased if a sufficient decrease in the functional $J(f,u)$ is not observed, and decreased if $J(f,u)$ decreases adequately. Here, $\tilde{u}$ represents the previous approximations of the control $u$ and the purpose of the quadratic term $\varepsilon \abs{u - \tilde{u}}_2^2 $ is to ensure that the pointwise minimizer of $H_\eps$, and thus the updates to $u$ remain close to the prior values $\tilde{u}$, especially when $\eps$ is large. An important note is that during the iteration, specifically in step (ii), the values of $f$ and $q$ used to update $u$ are those obtained from the previous iteration.

%
%
\begin{algorithm}[t!]
\caption{Sequential Quadratic Hamiltonian Method for Resolving Optimal Control Maps}

\begin{algorithmic}
\State (1) Input:  initial approximation $u^0$, maximum number of iterations $k_{max}$, tolerance $\kappa >0$, 
$\varepsilon >0$, \\ \hspace{0.6cm} $\lambda >1$, $\mu > 0$, and $\zeta\in\left(0,1\right)$. \\

\State (2) Set $k = 0$ and compute the solution $f^0$ to the FP equation \eqref{eq:forwardFP} with control $u=u^0$. \\

\State (3) Perform the following iteration.\\

\Repeat\\
    \State (i) Compute the solution $q^k$ to the adjoint equation \eqref{eq:adjFP} with $f=f^k$ and $u = u^k$.\\

   \State (ii) Compute $u^{k+1}$ satisfying the following for a.e. $t \in [0,T]$\, :
    \tred{$$H_\eps \left(t, f^k(\cdot,t), q^k(\cdot,t), u^{k+1}(t), u^{k}(t)\right) = \min_{w \in R_U}\, H_\eps \left(t, f^k(\cdot,t), q^k(\cdot,t), w,u^{k}(t)\right)$$ }   

    \State (iii) Compute the solution $f^{k+1}$ to the Fokker-Planck equation \eqref{eq:forwardFP} with control $u=u^{k+1}.$\\
    
    \State (iv) Compute $\tau=\|u^{k+1 } -u^{k}\|^2_{(L^{2}[0,T])^3}$.\\

    \State (v) {\bf if:} $\left\{J\left(f^{k+1},u^{k+1}\right)-J\left(f^{k},u^{k}\right)> -\mu \, \tau\right\}$ 
    then set $\eps \leftarrow \lambda\eps$ and return to \tred{step (ii)},\\
    \hspace{\algorithmicindent}\hspace{0.6cm}{\bf else if:} 
    $\left\{J\left(f^{k+1},u^{k+1}\right)-J\left(f^{k},u^{k}\right) \leq -
    \mu \, \tau \right\}$, then 
    set $\eps \leftarrow \zeta \eps$ and continue. \\

    \State (vi) Set $k\leftarrow k+1.$

\Until{$k = k_{\text{max}}$ or $\tau < \kappa$}
\end{algorithmic}
\label{algSQHmethod}
\end{algorithm}

We further note that in step (v) of this algorithm, if the inequality $J\left(f^{k+1},u^{k+1}\right)-J\left(f^{k},u^{k}\right)> -\mu \, \tau$ holds, this indicates that a sufficient decrease in the objective functional $J(f,u)$ has not been achieved. In such a case, the setting $\eps \leftarrow \lambda \eps$ causes an increase in $\eps$ since $\lambda > 1$, and by returning to step (2), we restart the iteration with the updated augmented Hamiltonian function. Conversely, if the inequality does not hold, it confirms that the required reduction in $J(f,u)$ has been met. The updated control $u^{k+1}$ is then adopted, along with the corresponding updates $f^{k+1}$ and $q^{k+1}$ as the solutions to \eqref{eq:forwardFP} and \eqref{eq:adjFP} respectively. In this situation, $\eps \leftarrow \zeta \eps$ causes a reduction in $\eps$ since $\zeta \in (0,1).$

The following theorem gives the convergence result of the SQH algorithm for solving \eqref{eq:optControl}
\begin{theorem} \label{theoremSQH} Let $\left(f^{k},u^{k}\right)$ and $\left(f^{k+1},u^{k+1}\right)$ be generated by the SQH method (algorithm \ref{algSQHmethod}) applied to \eqref{eq:optControl}, with $u^{k+1}, u^k \in U_{ad}$. Then for the current value of $\eps > 0$ chosen by algorithm \ref{algSQHmethod}, the following inequality holds:
\begin{equation} \label{eq:decreaseJ} J\left(f^{k+1},u^{k+1}\right) - J\left(f^{k},u^{k}\right) \leq -\epsilon \, \| u^{k+1} - u^k \|^{2}_{(L^{2}[0,T])^3}.\end{equation}
In particular, this implies $J\left(f^{k+1},u^{k+1}\right) - J\left(f^{k},u^{k}\right) \leq  - \mu \, \tau$ for $\epsilon =\mu$ and $\tau = \| u^{k+1} - u^k \|^{2}_{(L^{2}[0,T])^3}$. \end{theorem}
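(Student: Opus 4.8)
The plan is to combine the exact Hamiltonian representation of the change in cost from lemma~\ref{lem:hamiltonianRelationship} with the pointwise minimization that defines the SQH update in step~(ii) of algorithm~\ref{algSQHmethod}. First I would invoke lemma~\ref{lem:hamiltonianRelationship} for the two admissible controls $u^{k+1}$ and $u^{k}$, whose corresponding forward solutions are precisely the iterates $f^{k+1}=\mathcal S(u^{k+1})$ and $f^{k}=\mathcal S(u^{k})$ produced by the forward solves in the algorithm. This expresses the target quantity as $J(f^{k+1},u^{k+1})-J(f^{k},u^{k}) = \int_{0}^{T}\big[H(t,f^{k}(\cdot,t),q(\cdot,t),u^{k+1}(t))-H(t,f^{k}(\cdot,t),q(\cdot,t),u^{k}(t))\big]\,dt$, where the Hamiltonian is evaluated along the previous state $f^{k}$ — exactly the state used in step~(ii) — and along a suitable adjoint $q$, which I claim can be taken to be the $q^{k}$ computed in step~(i) (the one subtlety here is addressed at the end).

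Next I would use step~(ii) itself: for a.e.\ $t\in[0,T]$ the value $u^{k+1}(t)$ minimizes $w\mapsto H_{\eps}(t,f^{k}(\cdot,t),q^{k}(\cdot,t),u^{k}(t),w)=H(t,f^{k}(\cdot,t),q^{k}(\cdot,t),w)+\eps\,\abs{u^{k}(t)-w}_{2}^{2}$ over $w\in R_{U}$. Testing this against the admissible choice $w=u^{k}(t)$, for which the quadratic penalty vanishes, gives $H(t,f^{k},q^{k},u^{k+1}(t))+\eps\,\abs{u^{k+1}(t)-u^{k}(t)}_{2}^{2}\le H(t,f^{k},q^{k},u^{k}(t))$, i.e.\ $H(t,f^{k},q^{k},u^{k+1}(t))-H(t,f^{k},q^{k},u^{k}(t))\le -\eps\,\abs{u^{k+1}(t)-u^{k}(t)}_{2}^{2}$ for a.e.\ $t$. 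Integrating over $[0,T]$ and substituting into the identity from the first step gives the decrease estimate $J(f^{k+1},u^{k+1})-J(f^{k},u^{k})\le -\eps\,\norm{u^{k+1}-u^{k}}_{(L^{2}[0,T])^{3}}^{2}$ asserted in \eqref{eq:decreaseJ}; the ``in particular'' statement then follows on putting $\eps=\mu$ and $\tau=\norm{u^{k+1}-u^{k}}_{(L^{2}[0,T])^{3}}^{2}$, so that the sufficient-decrease test in step~(v) is met.

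The step I expect to need the most care is reconciling the adjoint produced by lemma~\ref{lem:hamiltonianRelationship} with the $q^{k}$ actually used in step~(ii): as stated, that lemma evaluates the Hamiltonian along the adjoint of its \emph{first} control argument, so a verbatim application to $(u^{k+1},u^{k})$ would instead involve the adjoint associated with $u^{k+1}$. I would handle this by re-running the integration-by-parts computation in the proof of lemma~\ref{lem:hamiltonianRelationship} with $q^{k}$ held fixed throughout; the discrepancy between the two versions is an integral in which $\B F(x,u^{k+1})-\B F(x,u^{k})$ and $\sigma_{j}^{2}(x,u^{k+1})-\sigma_{j}^{2}(x,u^{k})$ are paired against spatial derivatives of $q^{k+1}-q^{k}$. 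Since $\B F$ is affine in $u$ and each $\sigma_{j}^{2}$ is Lipschitz in $u$, and since the adjoint map is Lipschitz from $U_{\text{ad}}$ into $H^{2,1}(Q)$ by the same parabolic estimates used for lemma~\ref{lem:strongstrong}, both of these factors are $O(\norm{u^{k+1}-u^{k}})$, so the correction term is of order $\norm{u^{k+1}-u^{k}}^{2}$ — precisely the quantity the adaptive $\eps$-update in step~(v) is designed to absorb, which is why the relevant $\eps$ in the statement is the current value produced by algorithm~\ref{algSQHmethod}.
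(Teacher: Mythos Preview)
Your two-step plan---express $J(f^{k+1},u^{k+1})-J(f^{k},u^{k})$ via lemma~\ref{lem:hamiltonianRelationship} and then bound the integrand pointwise using the minimality in step~(ii) tested at $w=u^{k}(t)$---is exactly the paper's proof.

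The paper, however, simply writes $q^{k}$ when invoking the lemma and does not address the issue you raise in your final paragraph. Your observation is correct: a verbatim application of lemma~\ref{lem:hamiltonianRelationship} with $(u,\overline u)=(u^{k+1},u^{k})$ produces the adjoint of $u^{k+1}$ alongside $f^{k}$, while applying it with the roles swapped produces $q^{k}$ but paired with $f^{k+1}$; neither is literally the pair $(f^{k},q^{k})$ used in step~(ii). Your proposed fix---writing the discrepancy as an integral of $[\bF(\cdot,u^{k+1})-\bF(\cdot,u^{k})]$ and $[\sigma_{j}^{2}(\cdot,u^{k+1})-\sigma_{j}^{2}(\cdot,u^{k})]$ against derivatives of $q^{k+1}-q^{k}$ (equivalently, keeping $q^{k}$ and picking up $f^{k+1}-f^{k}$ instead), and bounding it by $C\|u^{k+1}-u^{k}\|_{(L^{2}[0,T])^{3}}^{2}$ via the Lipschitz estimates of lemma~\ref{lem:strongstrong}---is the honest way to close the gap. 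It yields $J(f^{k+1},u^{k+1})-J(f^{k},u^{k})\le -(\eps-C)\|u^{k+1}-u^{k}\|_{(L^{2}[0,T])^{3}}^{2}$, which is slightly weaker than the displayed statement but is precisely what guarantees that the adaptive increase of $\eps$ in step~(v) eventually triggers the sufficient-decrease test, and hence suffices for theorem~\ref{theoremSQHterminates}. In short, your argument is the paper's, carried out with an extra layer of care the paper omits.
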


\begin{proof}
In step (iii) of algorithm \ref{algSQHmethod}, for almost all $t\in [0,T]$, we have: $$ H_\eps(t, f^k, q^k, u^{k}, u^{k+1}) \leq H_\eps(t, f^k, q^k, u^{k}, w),
$$
for all $w \in R_U$. This implies $$ H_\eps(t, f^k, q^k, u^{k}, u^{k+1}) \leq H_\eps(t, f^k, q^k, u^{k}, u^k) = H(t, f^k, q^k, u^{k}). $$ Thus, we obtain the inequality
$$H(t, f^k, q^k, u^{k+1}) + \epsilon \, | u^{k+1} - u^k |^2_2 \leq H(t, f^k, q^k, u^{k}),$$
and applying lemma~\ref{lem:hamiltonianRelationship}, we have 
$$
\begin{aligned} 
J(f^{k+1}, u^{k+1}) - J(f^k, u^k) & = \int_0^{T} \Big(H(t, f^k, q^k, u^{k+1}) - H(t, f^k, q^k, u^{k})\Big) dt \\
& \le -\epsilon\| u^{k+1} - u^k \|^{2}_{(L^{2}[0,T])^3},
\end{aligned}
$$ as desired. 
 \end{proof}  

 As a consequence of theorem~\ref{theoremSQH}, we have the following result guaranteeing that algorithm~\ref{algSQHmethod} will terminate. 

 \begin{theorem}
\label{theoremSQHterminates}
If $\eps = \mu$ is chosen in algorithm~\ref{algSQHmethod} then 
\begin{itemize}
\item[(a)] the sequence $\{J(f^k,u^k)\}$ is decreasing and thus converges to some $J^* \ge \inf_{f,u} J(f,u)$, 
\item[(b)] $\displaystyle \lim_{k\to\infty} \|u^{k+1}-u^k\|_{(L^{2}[0,T])^3} = 0$
\end{itemize}
 \end{theorem}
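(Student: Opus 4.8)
The plan is to derive both statements directly from Theorem~\ref{theoremSQH}, specifically the identity \eqref{eq:decreaseJ}, together with the fact noted just after \eqref{eq:optControl} that $J(f,u)$ is bounded below over $U_{\text{ad}}$ (thanks to the nonnegativity and conserved unit mass of $f$ from Proposition~\ref{prop:FPexistence} and the boundedness of $K$ and $G$). First I would observe that the hypothesis $\eps = \mu$ forces step~(v) of algorithm~\ref{algSQHmethod} to always take the second branch: Theorem~\ref{theoremSQH} gives exactly $J(f^{k+1},u^{k+1}) - J(f^k,u^k) = -\mu\,\tau$ with $\tau = \|u^{k+1}-u^k\|^2_{(L^2[0,T])^3}$, so the inequality $J(f^{k+1},u^{k+1}) - J(f^k,u^k) \le -\mu\,\tau$ holds (with equality), $\eps$ is never inflated, and the decrease identity \eqref{eq:decreaseJ} is available at every iteration index $k$.

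For part (a): since $\mu > 0$ and $\tau \ge 0$, the identity \eqref{eq:decreaseJ} shows $J(f^{k+1},u^{k+1}) \le J(f^k,u^k)$ for every $k$, so $\{J(f^k,u^k)\}$ is non-increasing. It is bounded below by $\inf_{f,u} J(f,u) > -\infty$, hence by the monotone convergence theorem for real sequences it converges to some limit $J^*$; and since every term of the sequence is at least $\inf_{f,u} J(f,u)$, the limit satisfies $J^* \ge \inf_{f,u} J(f,u)$ as well.

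For part (b): I would telescope \eqref{eq:decreaseJ}. Summing over $k = 0, 1, \dots, N-1$ gives
$$\mu \sum_{k=0}^{N-1} \|u^{k+1}-u^k\|^2_{(L^2[0,T])^3} = J(f^0,u^0) - J(f^N,u^N) \le J(f^0,u^0) - J^*,$$
and the right-hand side is a finite constant independent of $N$. Letting $N \to \infty$ shows that the series $\sum_{k=0}^\infty \|u^{k+1}-u^k\|^2_{(L^2[0,T])^3}$ converges; in particular its general term tends to $0$, which is precisely $\lim_{k\to\infty}\|u^{k+1}-u^k\|_{(L^2[0,T])^3} = 0$.

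I do not anticipate any real obstacle: this is the standard ``sufficient decrease implies convergence of the objective values and vanishing of the step sizes'' argument. The only points worth a word of justification are that the choice $\eps = \mu$ keeps the iteration in the branch where \eqref{eq:decreaseJ} applies unconditionally, and that the lower bound on $J$ recorded earlier is exactly what powers both the monotone-convergence conclusion in (a) and the finiteness of the telescoped sum in (b).
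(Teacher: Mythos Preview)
Your proposal is correct and follows essentially the same approach as the paper: both derive (a) directly from \eqref{eq:decreaseJ} together with the lower bound on $J$, and both obtain (b) by telescoping \eqref{eq:decreaseJ} to bound the series of squared step sizes. Your additional sentence explaining why the choice $\eps=\mu$ keeps the iteration in the ``else'' branch of step~(v) is a useful clarification that the paper leaves implicit.
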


 \begin{proof}
Statement (a) follows directly from \eqref{eq:decreaseJ}, and convergence follows since the sequence is decreasing and bounded below. 

For statement (b), we rearrange \eqref{eq:decreaseJ} to read $$\|u^{k+1}-u^k\|_{(L^{2}[0,T])^3}^2 \le \frac{1}{\mu}\Big(J(f^{k},u^k) - J(f^{k+1},u^{k+1})\Big)$$ whereupon we have the telescoping partial sum $$0 \le \sum^K_{k=0} \|u^{k+1}-u^k\|_{(L^{2}[0,T])^3}^2 \le \frac 1 \mu \Big(J(f^{0},u^0) - J(f^{K+1},u^{K+1})\Big).$$ Taking $K\to\infty$ shows that the infinite series converges, so the summands must tend to zero, achieving (b). 
 \end{proof}

In particular, this shows that in algorithm~\ref{algSQHmethod}, if one chooses $\eps = \mu$, then for any tolerance $\kappa>0$, the convergence criterion $\tau < \kappa$ will be reached in finitely many steps. Further, the Lipschitz continuity of the control-to-state map derived in lemma~\ref{lem:strongstrong} shows that $\|f^{k+1}-f^k\|_{L^2(Q)}$ will also tend to zero. 

\tred{\begin{theorem}[Convergence of SQH in the $L^2$--$L^1$ case]
Assume that
\[
\ell(u)=\frac{\beta_2}{2}|u|^2+\beta_1|u|_1,
\qquad
\beta_1,\beta_2>0,
\]
so that the Hamiltonian can be written in the form
\[
H(t,f_u,q_u,w)
=
\frac{\beta_2}{2}|w|^2
+\beta_1|w|_1
-r(w),
\]
where
\[
r(w) = \int_\Omega \Big( f_u(\cdot,t)\B F(\cdot,w)\cdot \nabla q_u(\cdot,t) + \frac 1 2 \sum^2_{j=1} f_u(\cdot,t) \sigma_j(\cdot,w)^2\frac{\partial^2q_u}{\partial x_j^2}(\cdot,t) \Big)dx.
\] Suppose that there exists a constant $C_r>0$ such that
\[
\|r(u)-r(\widetilde u)\|_{(L^2(0,T))^3}
\le
C_r
\|u-\widetilde u\|_{(L^2(0,T))^3}
\]
for all admissible controls $u,\widetilde u\in (L^2(0,T))^3$. If
\[
\beta_2>C_r,
\]
then, for every fixed $\varepsilon>0$, the SQH update defines a contraction mapping on
$(L^2(0,T))^3$.
Consequently, the sequence $\{u^k\}$ generated by Algorithm~1 converges strongly in
$(L^2(0,T))^3$ to a unique fixed point $u^*$.
Moreover,
\[
H(t,f^*(\cdot,t),q^*(\cdot,t),u^*(t))
=
\min_{w\in R_U}
H(t,f^*(\cdot,t),q^*(\cdot,t),w)
\]
for almost every $t\in[0,T]$, where
$f^*=S(u^*)$ and $q^*$ is the corresponding adjoint state.
\end{theorem}

\begin{proof}
For the augmented Hamiltonian
\[
H_\varepsilon
(t,f^k,q^k,w,u^k)
=
H(t,f^k,q^k,w)
+
\varepsilon |w-u^k|^2,
\]
the SQH update is given by
\[
u^{k+1}
=
\arg\min_{w\in R_U}
\left\{
\frac{\beta_2}{2}|w|^2
+\beta_1|w|_1
-r(w)
+\varepsilon |w-u^k|^2
\right\}.
\]
Since $R_U\subset\mathbb R_+^3$, we have
$|w|_1=\sum_{i=1}^3 w_i$.
Therefore, the minimization decouples componentwise and yields
\[
u^{k+1}
=
\Pi_{R_U}
\left(
\frac{
r(u^k)
+
2\varepsilon u^k
-
\beta_1\mathbf 1
}
{\beta_2+2\varepsilon}
\right),
\]
where $\Pi_{R_U}$ denotes the orthogonal projection onto $R_U$ and
$\mathbf 1=(1,1,1)^T$.
Define the SQH mapping
\[
T_\varepsilon(u)
=
\Pi_{R_U}
\left(
\frac{
r(u)
+
2\varepsilon u
-
\beta_1\mathbf 1
}
{\beta_2+2\varepsilon}
\right).
\]
Let $u_1,u_2\in (L^2(0,T))^3$.
Since the projection onto a closed convex set is nonexpansive,
\[
\|T_\varepsilon(u_1)-T_\varepsilon(u_2)\|
\le
\frac{1}{\beta_2+2\varepsilon}
\|
r(u_1)-r(u_2)
+
2\varepsilon (u_1-u_2)
\|.
\]
Using the Lipschitz continuity of $r$, we have
\[
\|T_\varepsilon(u_1)-T_\varepsilon(u_2)\|
\le
\frac{C_r+2\varepsilon}
{\beta_2+2\varepsilon}
\,
\|u_1-u_2\|.
\]
Since $\beta_2>C_r$,
\[
\frac{C_r+2\varepsilon}
{\beta_2+2\varepsilon}
<1.
\]
Hence, $T_\varepsilon$ is a contraction on
$(L^2(0,T))^3$.
By Banach's fixed-point theorem, there exists a unique fixed point
$u^*\in (L^2(0,T))^3$ and
\[
u^k\to u^*
\qquad
\text{strongly in }
(L^2(0,T))^3.
\]
At the fixed point,
\[
u^*
=
\arg\min_{w\in R_U}
H_\varepsilon(t,f^*,q^*,w,u^*).
\]
Since
\[
H_\varepsilon(t,f^*,q^*,u^*,u^*)
=
H(t,f^*,q^*,u^*),
\]
the fixed-point condition implies
\[
H(t,f^*,q^*,u^*)
\le
H(t,f^*,q^*,w)
\qquad
\forall\,w\in R_U,
\]
for almost every $t\in[0,T]$.
Therefore,
\[
H(t,f^*(\cdot,t),q^*(\cdot,t),u^*(t))
=
\min_{w\in R_U}
H(t,f^*(\cdot,t),q^*(\cdot,t),w)
\]
for almost every $t\in[0,T]$.
\end{proof}
}

\section{Results} \label{sec:results}
In this section, we present the results of our FP control framework \eqref{eq:optControl} simulated using the SQH algorithm. In all simulations we set $b = \delta = 0.01$. Having done so, solutions $S,I,R$ of the deterministic model \eqref{eq:controlledSIR} will satisfy $S+I+R \to 1$ as $t \to \infty$. Accordingly, if we choose $S_0 \in [0,1]$ and $I_0$ positive but close to zero, we empirically observe that $S(t),I(t) \in [0,1]$ for all $t$. Because of this, we choose the spatial domain of the FP equation \eqref{eq:FPcontrolled} to be $[0,1]\times [0,1]$. With infection rate $\beta = 3$ and recovery rate $\gamma = 1$, the disease dynamics have more-or-less entirely played out by time $T = 10$ (see figure~\ref{fig:uncontrolled}, top). The values of the upper bounds of the controls $\alpha$, $\eta$ and $v$ are set to $\alpha_{\text{max}} = 0.85$, $\eta_{\text{max}} = 0.25\gamma$ and $v_{\text{max}} = 0.1$, respectively denoting that the maximum decrease in infection rate due to non-pharmaceutical intervention is $85\%$, \tred{the maximum achievable increase in recovery rate due to treatment efforts is $25\%$}, and the maximum vaccination rate is $10\%$ of the population per unit time. For initial distribution of $(S_0,I_0)$, we consider a normal distribution centered at $(0.99, 0.01)$ with variance $0.025$ (we then multiply the distribution function by the indicator function of $[0,1]\times [0,1]$ and normalize in $L^1$). \tred{This represents a scenario where the policy-maker estimates that roughly 1\% of the population is infected at the outset, but there is some uncertainty in this estimate so that $I_0 \sim \mathcal N(0.01,0.025).$} The diffusion coefficients are chosen in the spirit of \cite{JJS,PW3}: $$\sigma_S = -\sigma_I = \sqrt{0.02} (1-\alpha)SI$$ which is meant to express uncertainty in the infection rate. We use a mixed $L^1 / L^2$ control cost $$\ell(u) = \beta_1 \|u\|_1 + \frac {\beta_2}2 \|u\|_2^2$$ where $\beta_1,\beta_2$ are constant. For our simulations we take $\beta_1 = 0.2, \beta_2 = 0.1$, though any nonnegative choices are acceptable. \tred{This represents a joint $L^1/L^2$ cost assessment. In application, true cost assessment may be more involved than a simple formula like this. Here $L^1$ cost is an abstract measure of ``total amount" of controls used, and is often preferred in biological applications as a more physically meaningful quantity, whereas $L^2$ costs have less physical meaning but better theoretical properties when considering the optimization problem. We note that the theory in section \ref{sec:analysis} is agnostic to the particular choice of $\ell$ as long as it is convex and lower-semicontinuous.} 

\tred{Henceforth we fix all of these parameters. We note that these are synthetic values, chosen to demonstrate different aspects of the model and not to represent any particular epidemic in any particular region.} 

\begin{figure}[t!]
\centering
\includegraphics[width=0.48\textwidth]{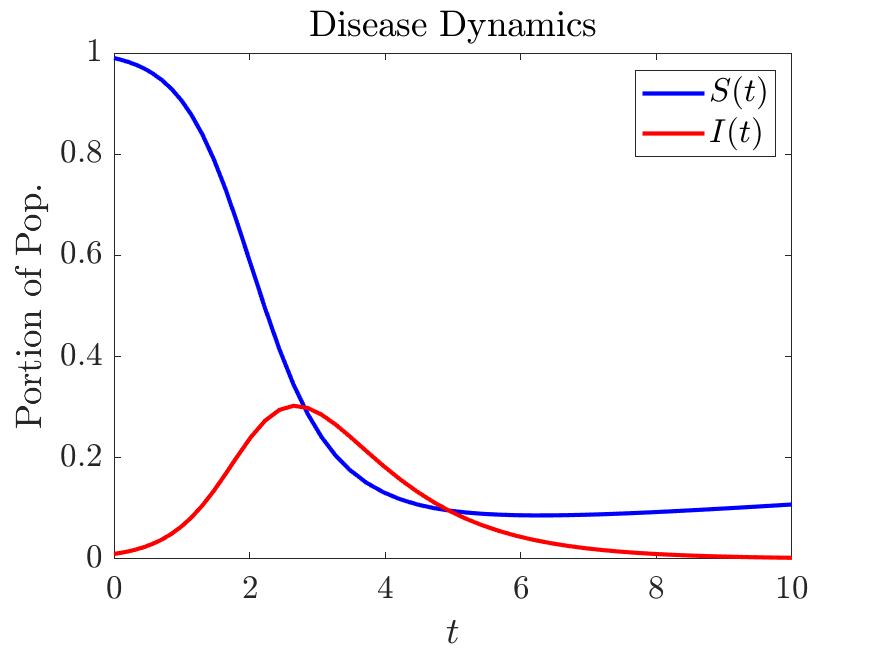} \,\,\, 
\includegraphics[width=0.48\textwidth]{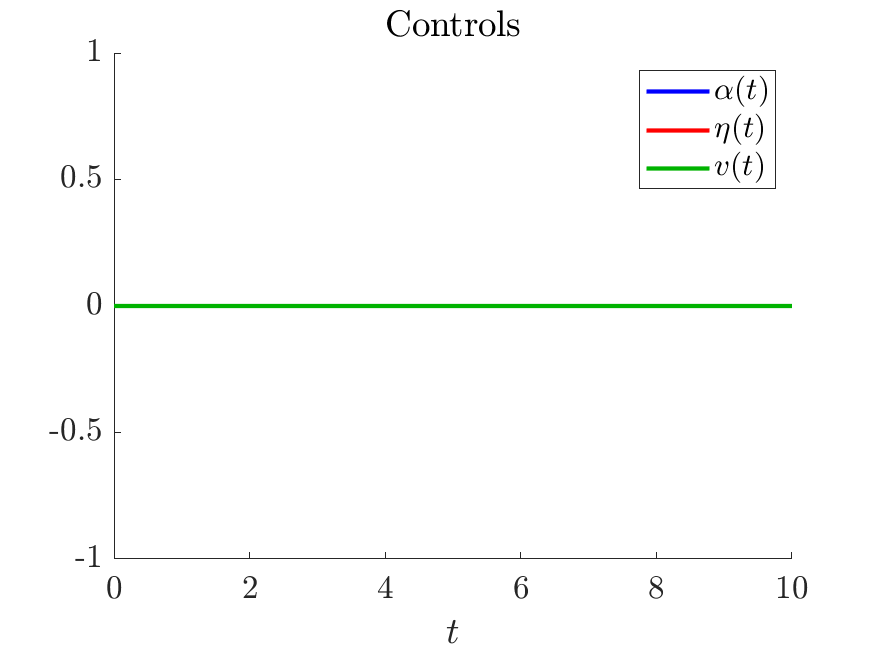}\\
\noindent\rule{0.9\textwidth}{1.5pt}
\includegraphics[width=0.3\textwidth,trim=55 0 75 0, clip]{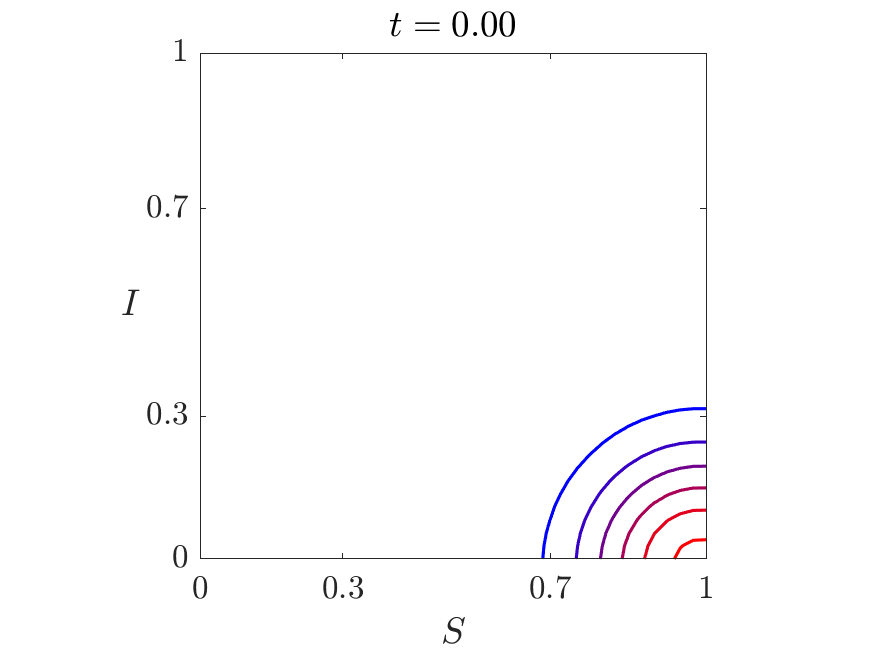} \, \includegraphics[width=0.3\textwidth,trim=55 0 75 0, clip]{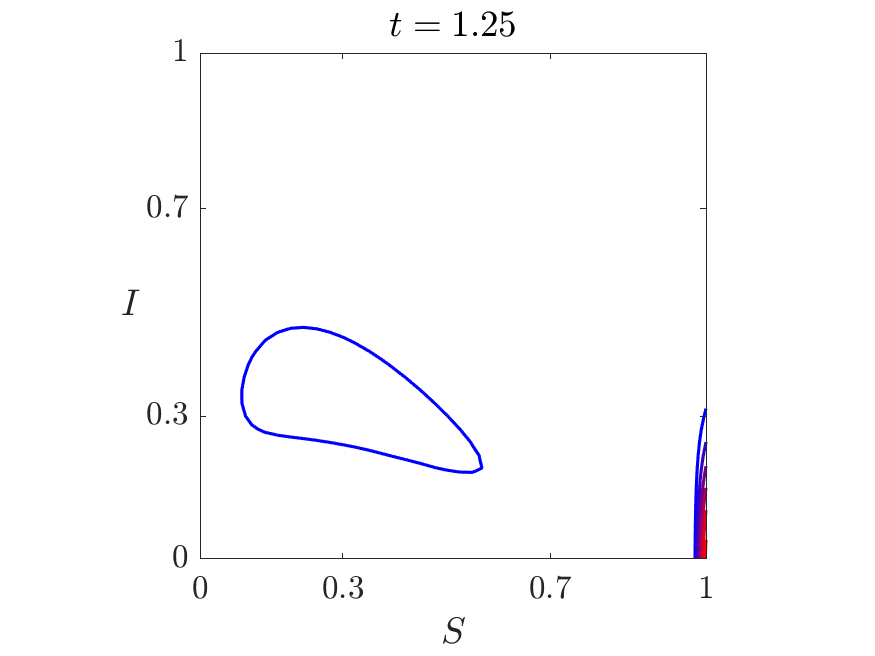} \,
\includegraphics[width=0.3\textwidth,trim=55 0 75 0, clip]{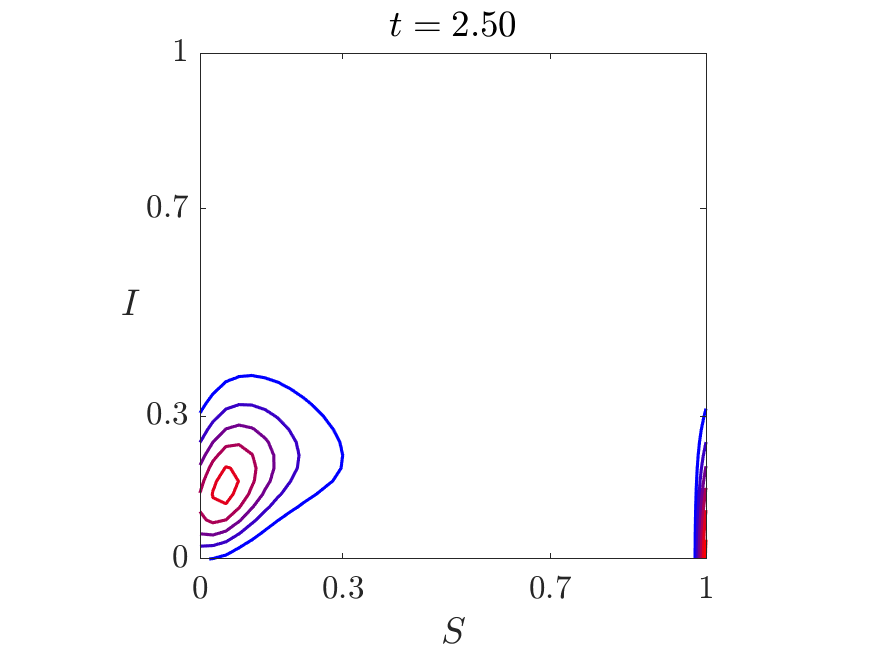} \\
\includegraphics[width=0.3\textwidth,trim=55 0 75 0, clip]{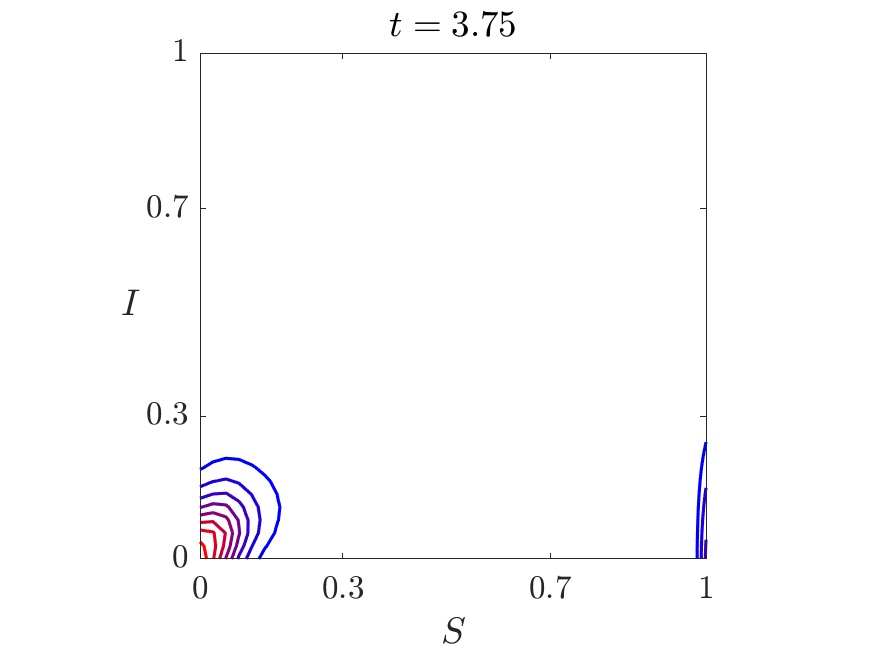} \,
\includegraphics[width=0.3\textwidth,trim=55 0 75 0, clip]{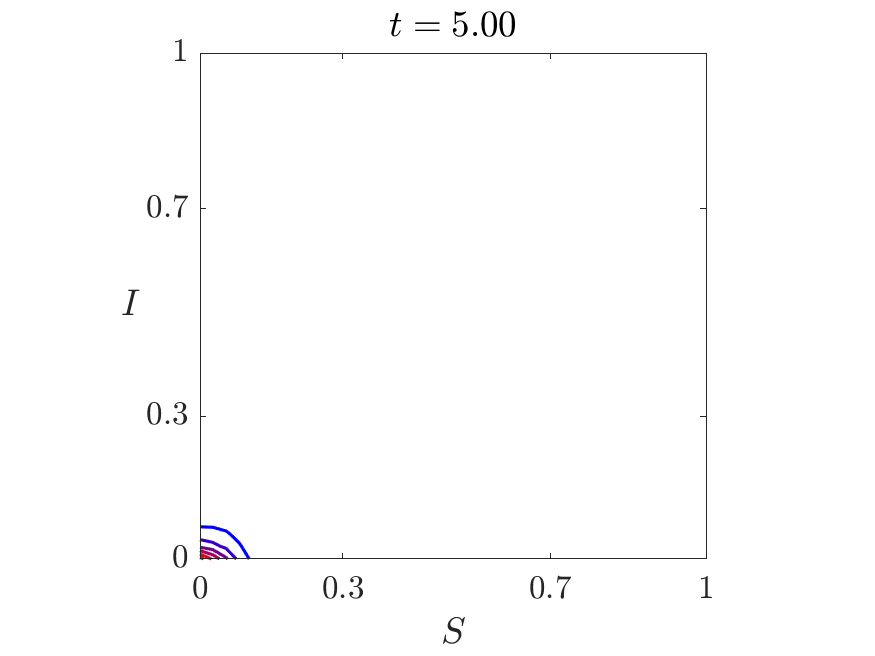} \,
\includegraphics[width=0.3\textwidth,trim=55 0 75 0, clip]{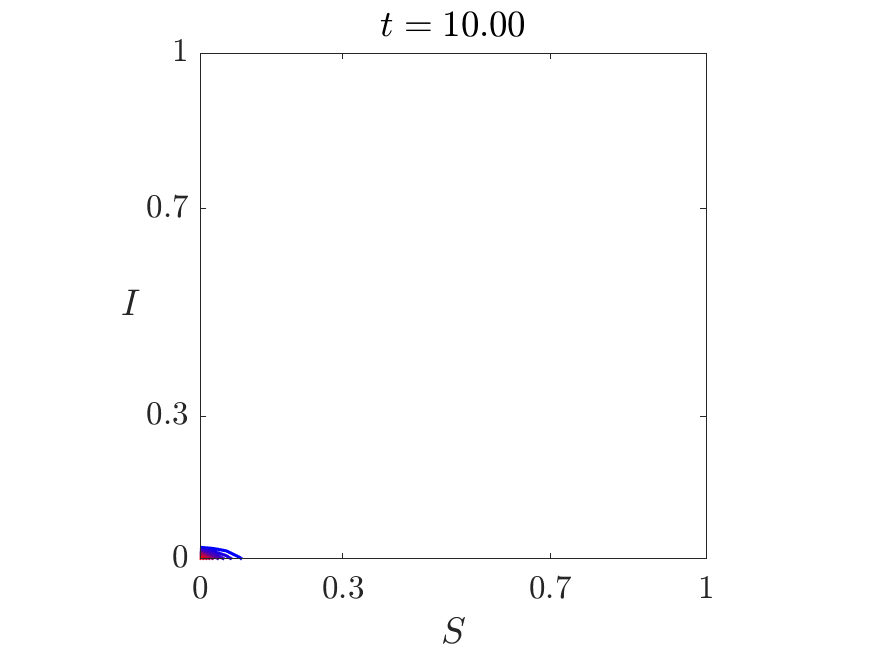} \,
\caption{The dynamics corresponding to our choice of parameters in the \emph{uncontrolled} case (i.e. $\alpha,\eta,v \equiv 0$). Top: the deterministic disease dynamics governed by \eqref{eq:controlledSIR}. Bottom: snapshots of the solution $f(x,t)$ of the Fokker-Planck equation \eqref{eq:FPcontrolled} in the form of contour plots with higher values in red and lower values in blue.}
\label{fig:uncontrolled}
\end{figure}

The initial guesses for the controls in the SQH algorithm are all set to 0. We discretize the spatial domain into 41 equally spaced points in each direction while the temporal grid consists of 81 equally spaced points. The hyperparameters for the SQH algorithm are set at $\mu = 10^{-9}, \zeta = 0.9, \lambda = 1.1, \kappa = 10^{-3}$ and $\varepsilon = 1.$ The maximum iteration count is $k_{\text{max}} = 150$ though this is never reached in our examples. 

Before solving any control problems, we include results of the simulation in the \emph{uncontrolled} scenario (i.e. $\alpha,\eta,v \equiv 0$) for comparison. These results are seen in figure~\ref{fig:uncontrolled}, where the top plots contain the deterministic disease dynamics following \eqref{eq:controlledSIR} starting from $(S_0,I_0) = (0.99, 0.01)$, and the bottom plots give snapshots of the solution $f(x,t)$ of the Fokker-Planck equation \eqref{eq:FPcontrolled} in the form of contour plots where red contours represent higher values and blue contours represent lower values. The dynamics show a peak infection of roughly $I = 0.3$ occurring at roughly $t = 3$. Referring to the distribution, we note that by time $t = 5$ (and certainly by time $t = 10$), essentially all the mass is centered near $(S,I) = (0,0)$, meaning that the infection has run its course, and a vast majority of the population is in the recovered class. There are some interesting artifacts in the distribution function toward the lower right hand corner of the plots when $t = 1.25, 2.50, 3.75$ which seem to indicate that there is a nonzero chance that the susceptible population remains prominent until those times. This could possibly be explained by the observations of Bertozzi et al. \cite{bertozzi2020challenges} regarding the relationship between the time scale for the disease and the initial infected population. In essence, they conclude that for simple compartmental models lowering the initial infection size $I_0$ will not affect the total infection size, but simply delay the onset of the wave of infections. The possibility of the susceptible population remaining large for such a long time could then be explained by the fact that our initial distribution allows for $I_0$ values that are arbitrarily small. 

Beyond this, we include three simulated scenarios involving optimal control. In each scenario, we choose different forms of the running cost $G(x,t)$ and terminal cost $K(x)$ from \eqref{eq:optControl}, corresponding to different concerns which a policy-maker may have. We recall that we are using variables $x = (x_1,x_2) = (S,I)$ here. \\

\begin{figure}[t!]
\centering
\includegraphics[width=0.48\textwidth]{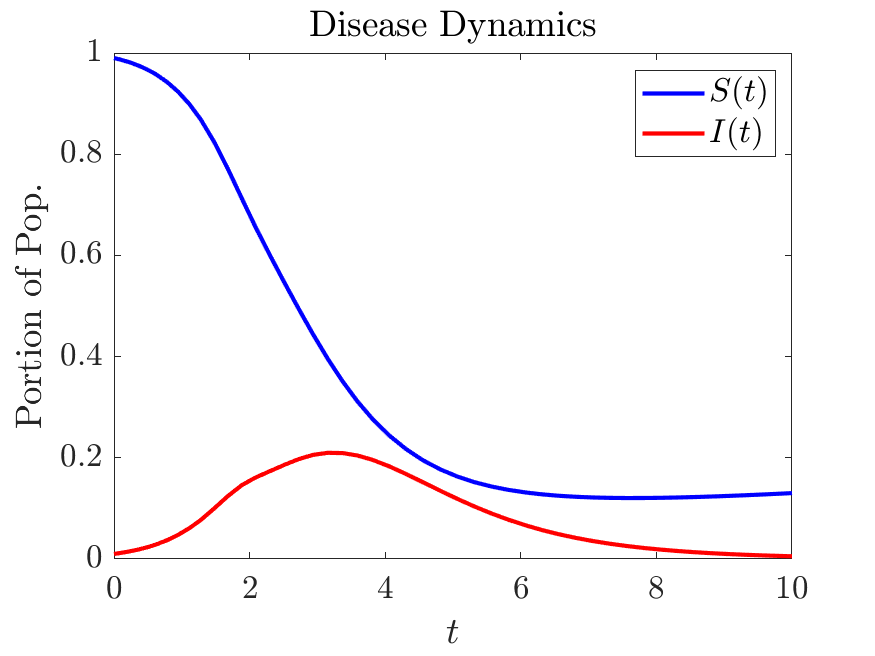} \,\,\, 
\includegraphics[width=0.48\textwidth]{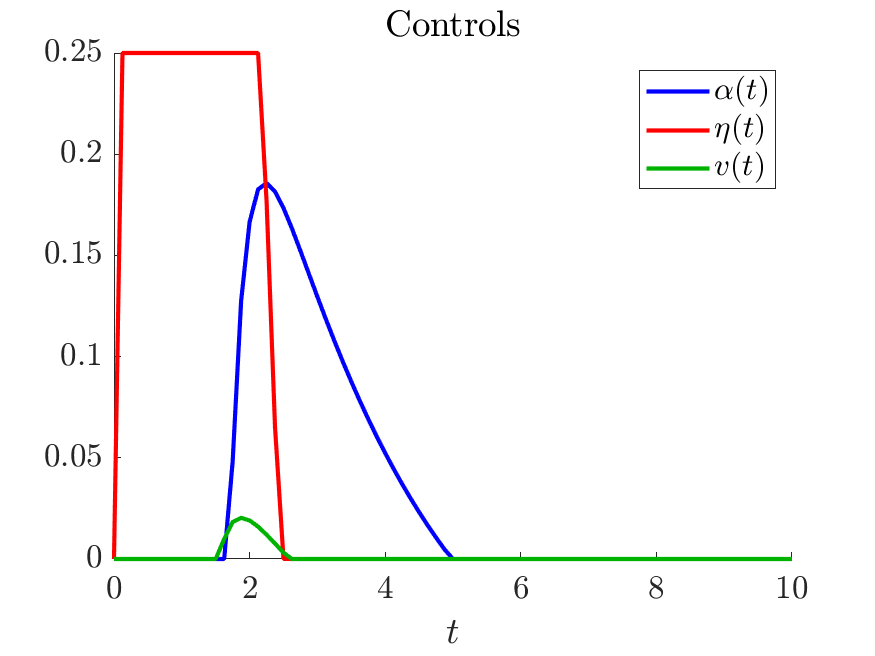}\\
\noindent\rule{0.9\textwidth}{1.5pt}
\includegraphics[width=0.3\textwidth,trim=55 0 75 0, clip]{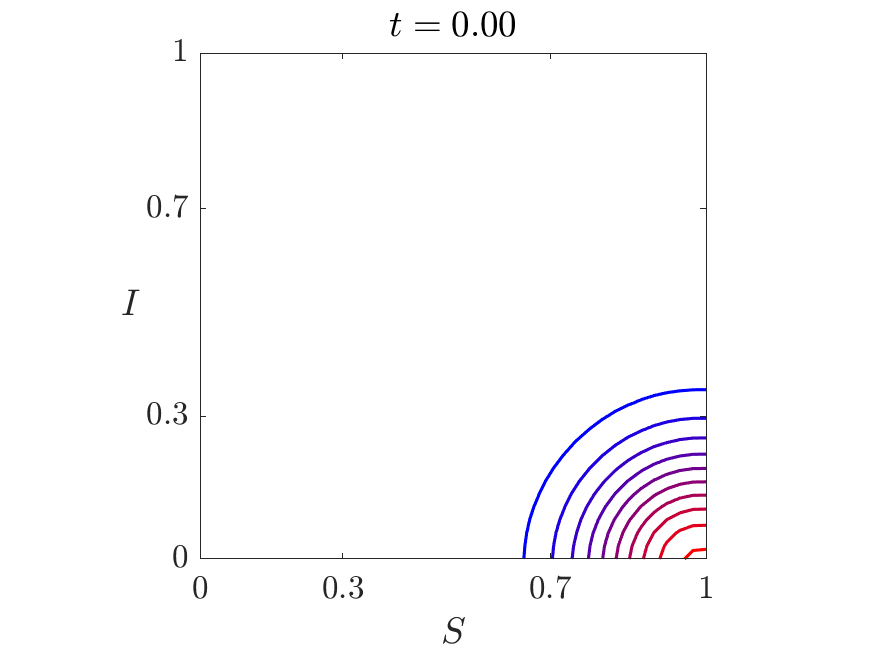} \, \includegraphics[width=0.3\textwidth,trim=55 0 75 0, clip]{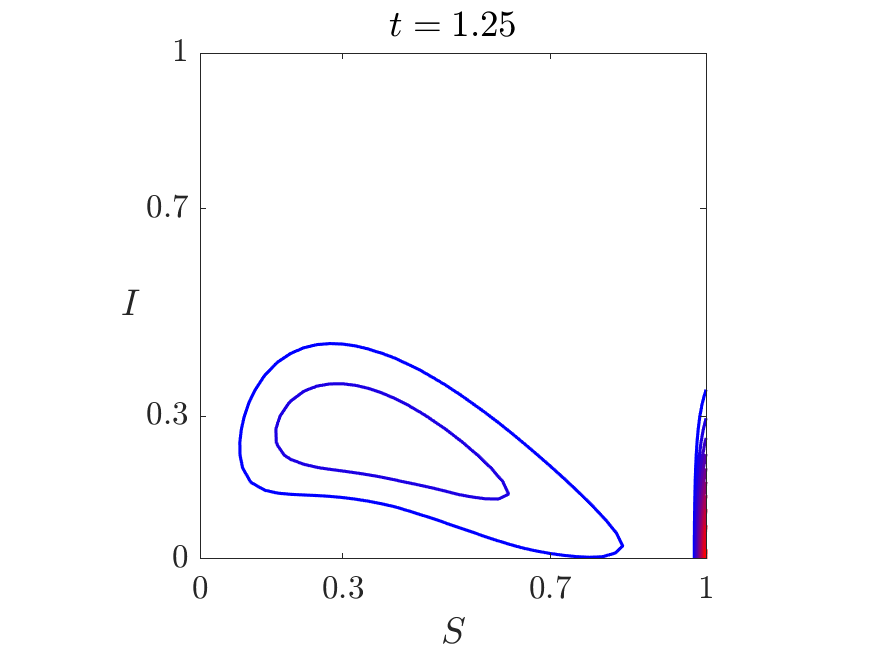} \,
\includegraphics[width=0.3\textwidth,trim=55 0 75 0, clip]{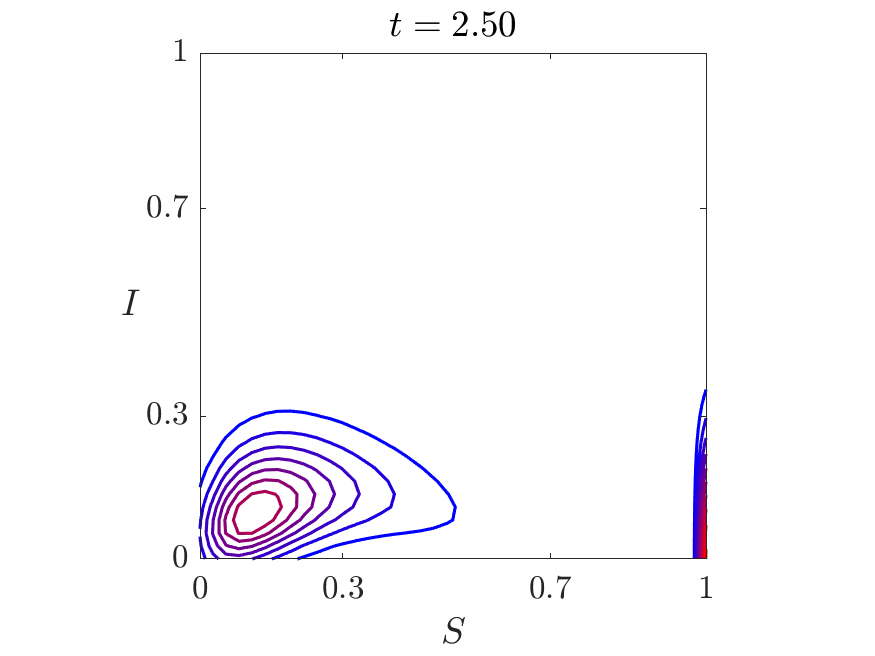} \\
\includegraphics[width=0.3\textwidth,trim=55 0 75 0, clip]{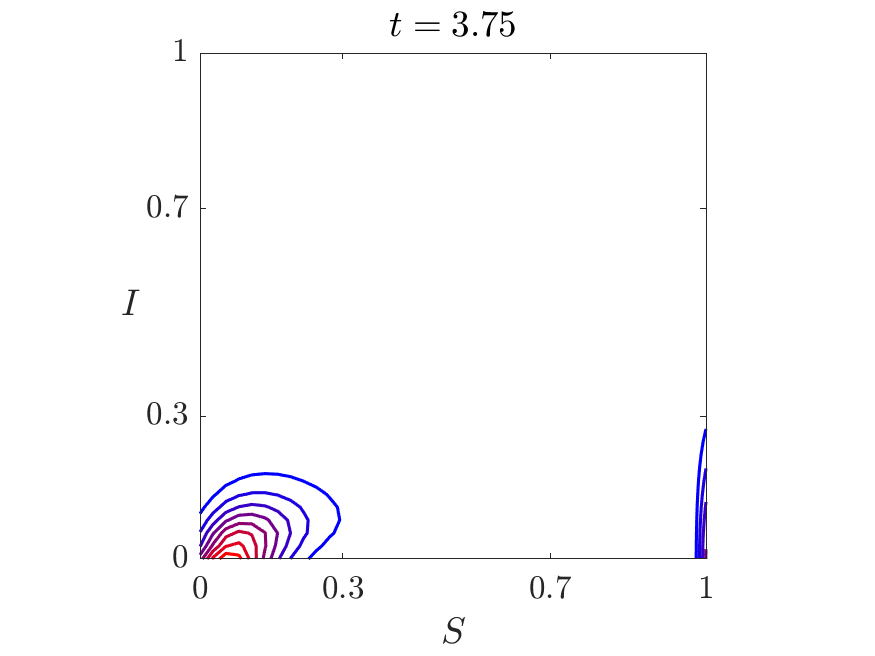} \,
\includegraphics[width=0.3\textwidth,trim=55 0 75 0, clip]{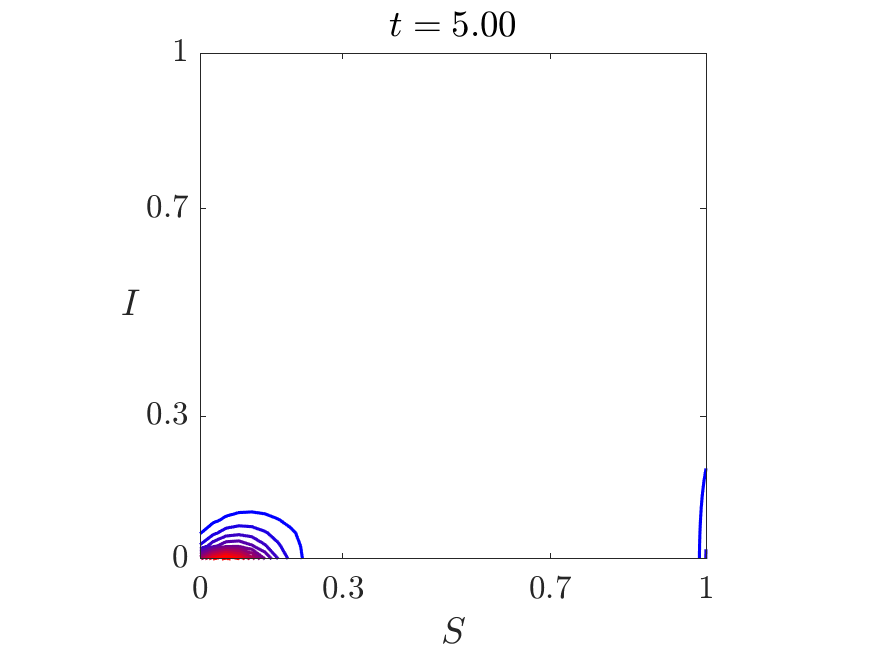} \,
\includegraphics[width=0.3\textwidth,trim=55 0 75 0, clip]{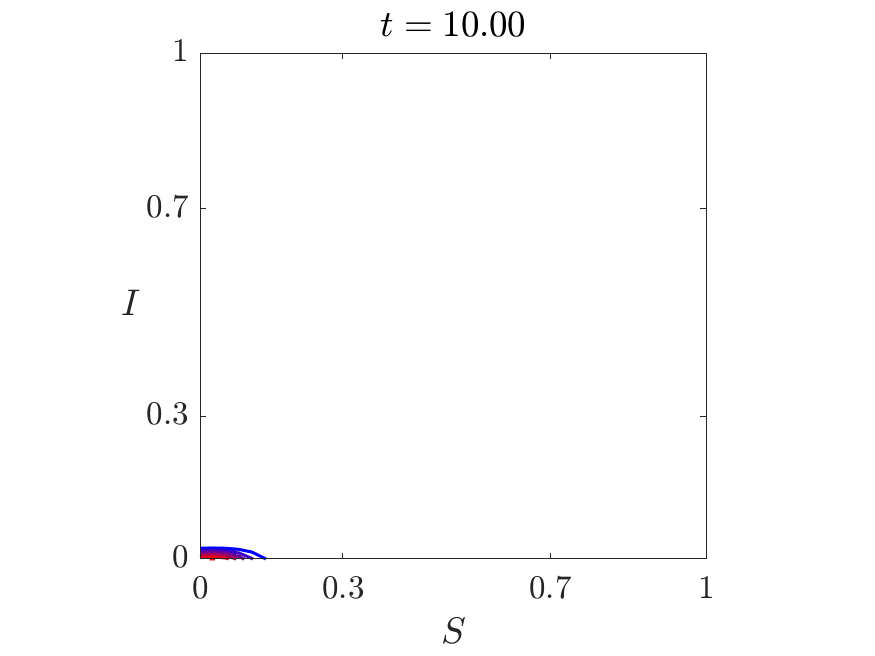} \,
\caption{Scenario 1: $G(x,t) = x_2 = I$, $K(x) \equiv 0$. In this scenario, the optimal control employs the maximal level of treatment efforts ($\eta(t)$) in the initial stage of infections, and as infections near their peak, these efforts are supplemented with modest use of NPIs ($\alpha(t)$) and vaccination ($v(t)$). }
\label{fig:scen1}
\end{figure}

\noindent {\bf Scenario 1.} In this scenario, we take $G(x,t) = 1.5I$ and $K(x) \equiv 0$, meaning there is no terminal cost and running cost is assessed purely based on the total number of infections (i.e. distribution functions $f(x,t)$ are penalized when they have more mass located in regions where $I$ is higher). Results of this simulation are contained in figure~\ref{fig:scen1}. In this case, the optimal control strategy is to implement maximum level of treatment efforts ($\eta(t)$) in the early stages of disease spread, and then supplement this with some use of NPIs ($\alpha(t)$ and vaccination ($v(t)$) when the infections near their peak. The net effect of this is to significantly slow down disease spread, as seen in the bottom panels which display the evolution of the distribution function $f(x,t)$ of possible states. Comparing the snapshots in figure~\ref{fig:scen1} with the corresponding snapshots for the uncontrolled simulation in figure~\ref{fig:uncontrolled}, we see that while the result at $t = 10$ is roughly the same, the mass in the controlled case travels more slowly from right to left, and is more concentrated on regions with higher $S$ and/or lower $I$ throughout (as can be seen at $t=1.25, 2.50, 3.75, 5.00$).\\

\noindent {\bf Scenario 2.} In this scenario, we still take terminal cost $K(x) \equiv 0$, but now choose running cost $G(x,t) = 1_{\{I\ge 0.15\}}(x)$ to be the indicator function of the set $\{I \ge 0.15\}$; that is, $$G(x,t) = \begin{cases}
    1, & I \ge 0.15, \\ 0, & I < 0.15.
\end{cases}$$ We use this to model something like a hospital capacity constraint, wherein no cost is assessed for infections until they reach a certain threshold beyond which hospitals are over-burdened. This is chosen to mirror some rhetoric from the early days of the COVID-19 pandemic, where policy-makers expressed concern over the excess burden to healthcare infrastructure and emphasized the need to ``flatten the curve." The results in figure~\ref{fig:scen2} demonstrate that the optimal controls for this cost functional very effectively accomplish this goal. In the plots, we have included the line $I = 0.15$ representing this threshold. Here the optimal control strategy is similar in the initial stages of the epidemic with maximal treatment efforts ($\eta(t)$). It appears that in this initial stage, it is infeasible to keep all the mass for $f(x,t)$ in the region $\{I<0.15\}$. However, subsequently, the optimal strategy involves maximal employment of vaccination ($v(t)$) and strong use of NPIs ($\alpha(t)$) to quickly drive the majority of the mass for $f(x,t)$ under the line $I = 0.15$. This is seen in the snapshots at times $t = 2.50,3.75,5.00$ where the contours have been significantly flattened when compared with Scenario 1. Likewise, the dynamics in the top left of figure~\ref{fig:scen2} exhibit precisely this ``flattening the curve" behavior, wherein the epidemic is prolonged, but the peak of infections is much smaller than in Scenario 1 or in the uncontrolled case. \\

\begin{figure}[t!]
\centering
\includegraphics[width=0.48\textwidth]{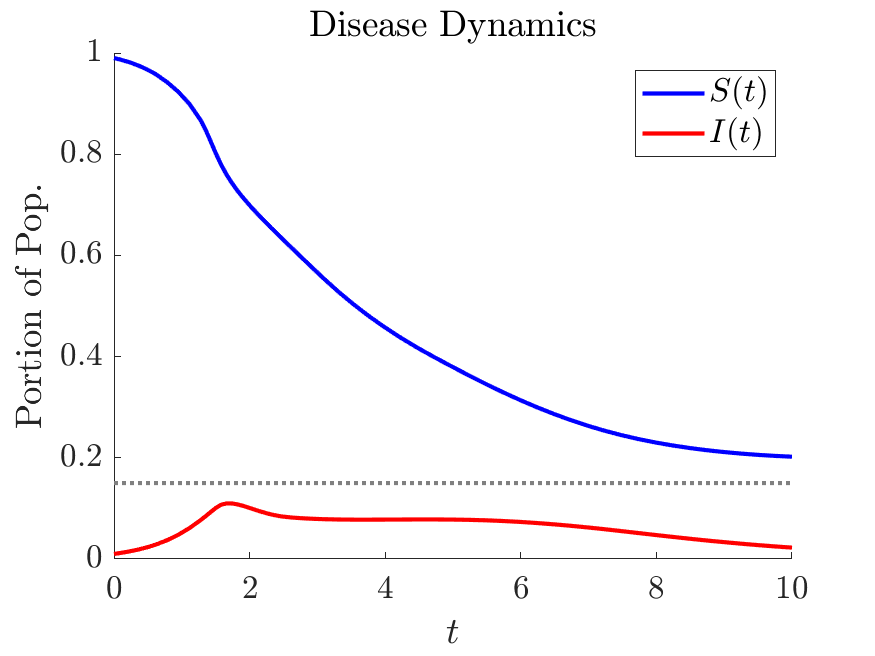} \,\,\, 
\includegraphics[width=0.48\textwidth]{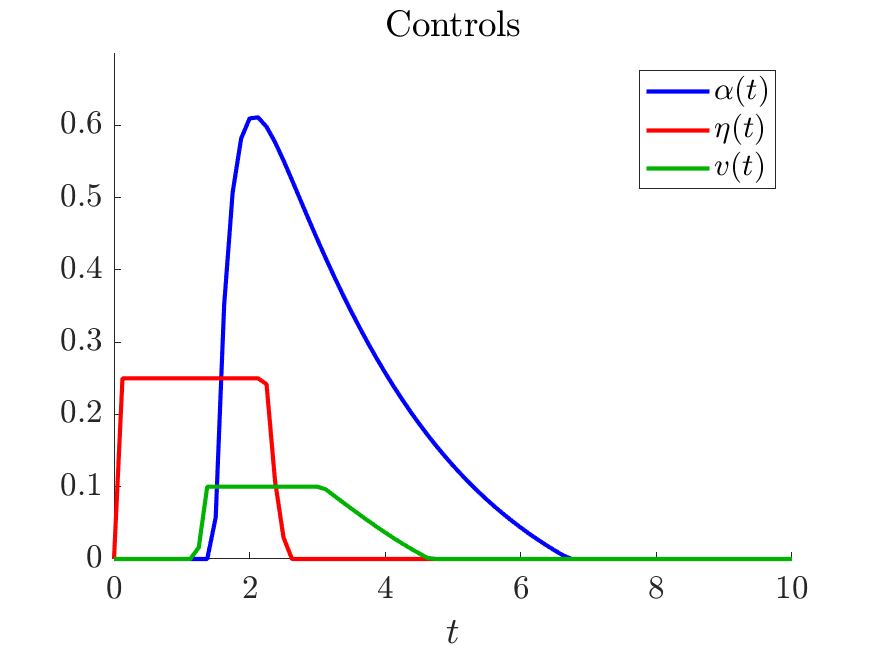}\\
\noindent\rule{0.9\textwidth}{1.5pt}
\includegraphics[width=0.3\textwidth,trim=55 0 75 0, clip]{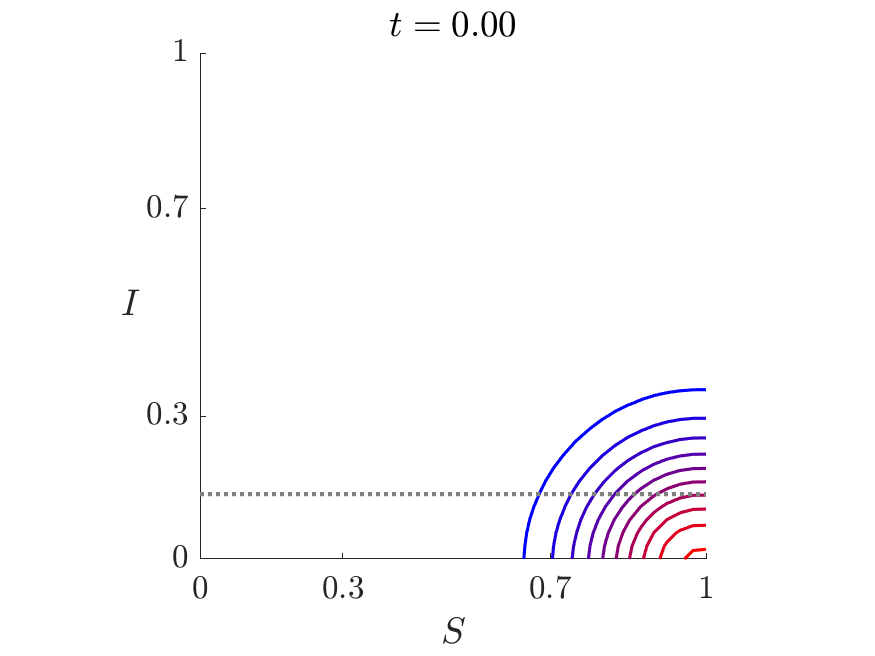} \, \includegraphics[width=0.3\textwidth,trim=55 0 75 0, clip]{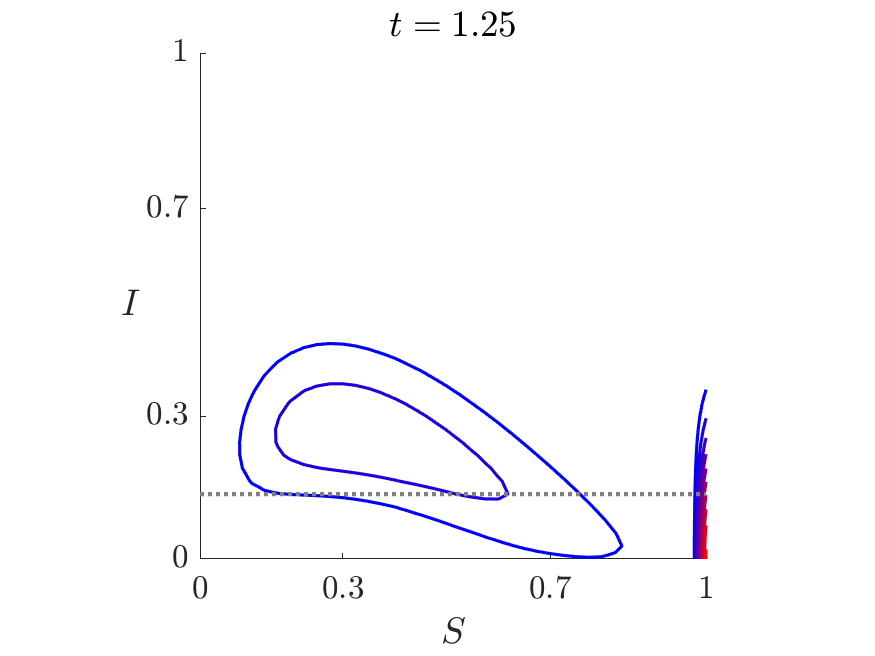} \,
\includegraphics[width=0.3\textwidth,trim=55 0 75 0, clip]{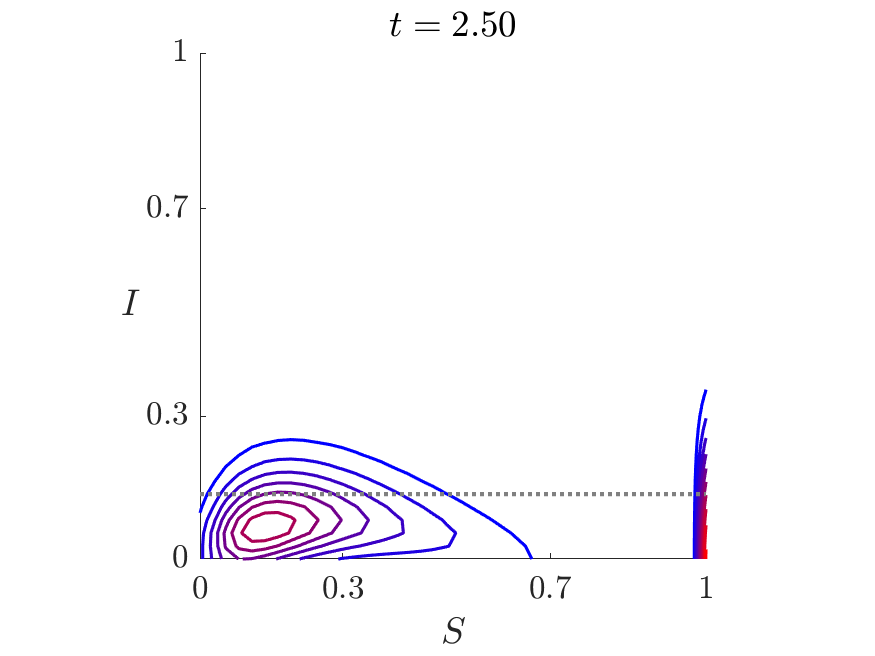} \\
\includegraphics[width=0.3\textwidth,trim=55 0 75 0, clip]{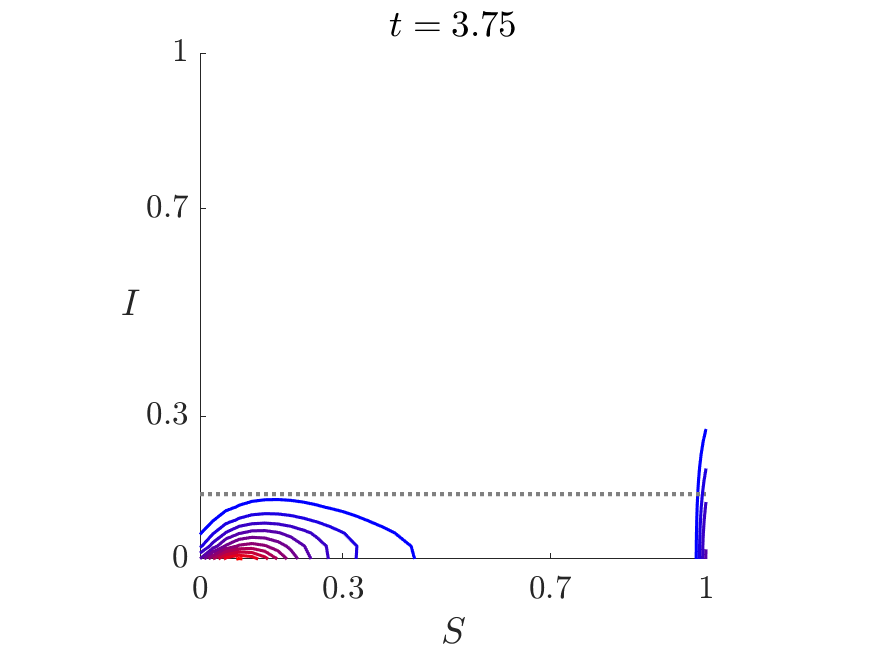} \,
\includegraphics[width=0.3\textwidth,trim=55 0 75 0, clip]{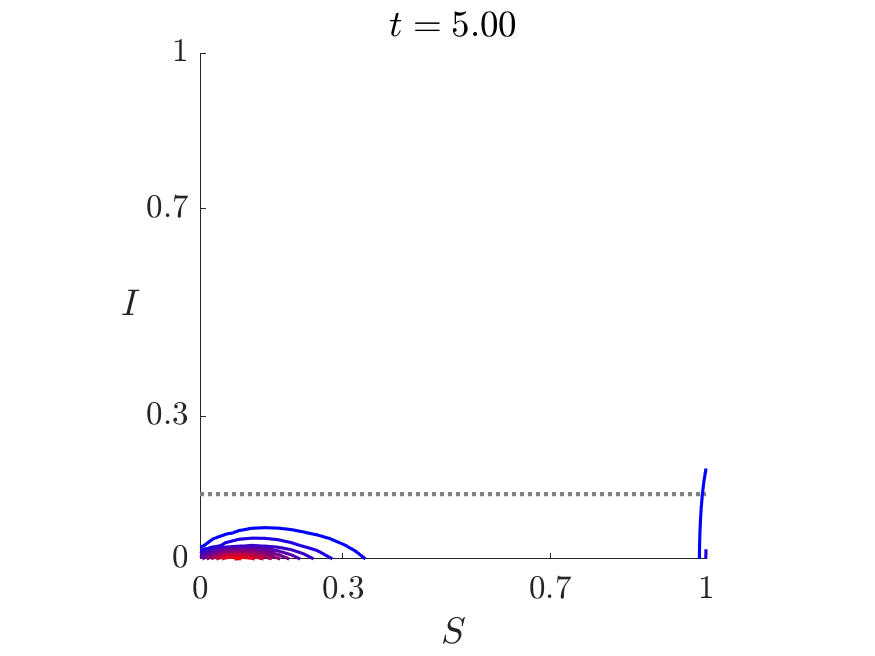} \,
\includegraphics[width=0.3\textwidth,trim=55 0 75 0, clip]{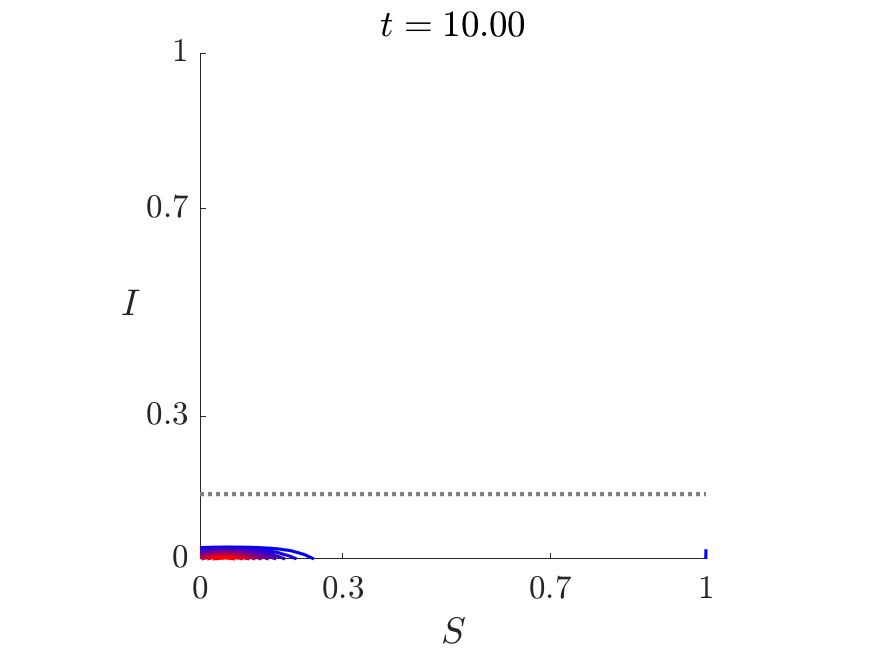} \,
\caption{Scenario 2: $G(x,t) = 1_{\{I\ge 0.15\}}(x)$, the indicator function of the set $\{I \ge 0.15\}$, $K(x) \equiv 0$. Here $0.15$ is imagined to be the infection threshold past which the healthcare system would be overburdened. In this scenario, the optimal control uses the maximal level of treatment efforts ($\eta(t)$) in the initial stage of infections, the maximum level of vaccination ($v(t)$) during the peak infections, and a substantially stronger employment of NPIs ($\alpha(t)$) than in Scenario 1. This has the effect of pushing the bulk of the mass for the distribution $f(x,t)$ below the line $I = 0.15$ (grey dotted line) much more quickly than in Scenario 1. With these controls, the dynamics (top left) exhibit a more prolonged but flattened epidemic.}
\label{fig:scen2}
\end{figure}

\begin{figure}[t!]
\centering
\includegraphics[width=0.48\textwidth]{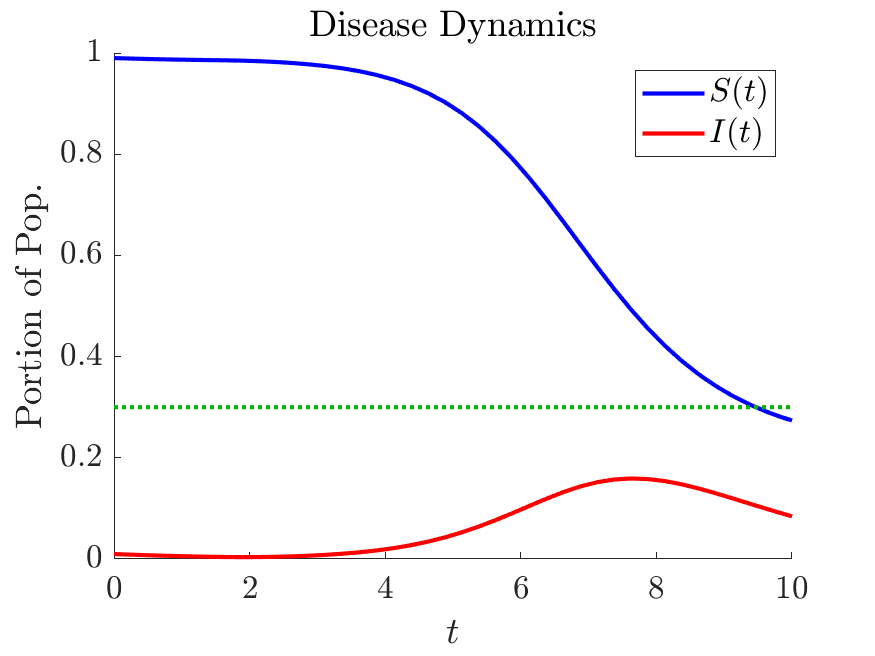} \,\,\, 
\includegraphics[width=0.48\textwidth]{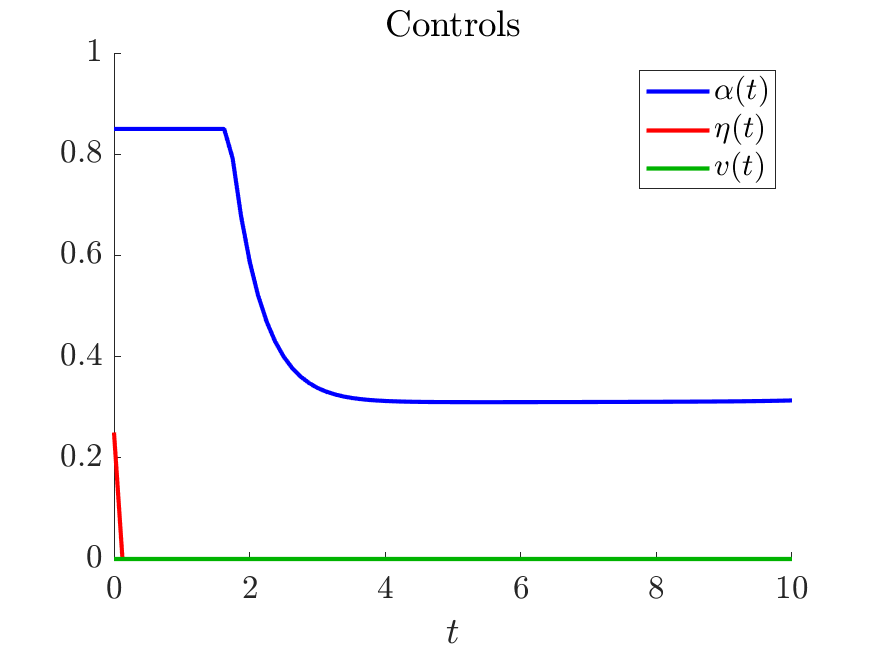}\\
\noindent\rule{0.9\textwidth}{1.5pt}
\includegraphics[width=0.3\textwidth,trim=55 0 75 0, clip]{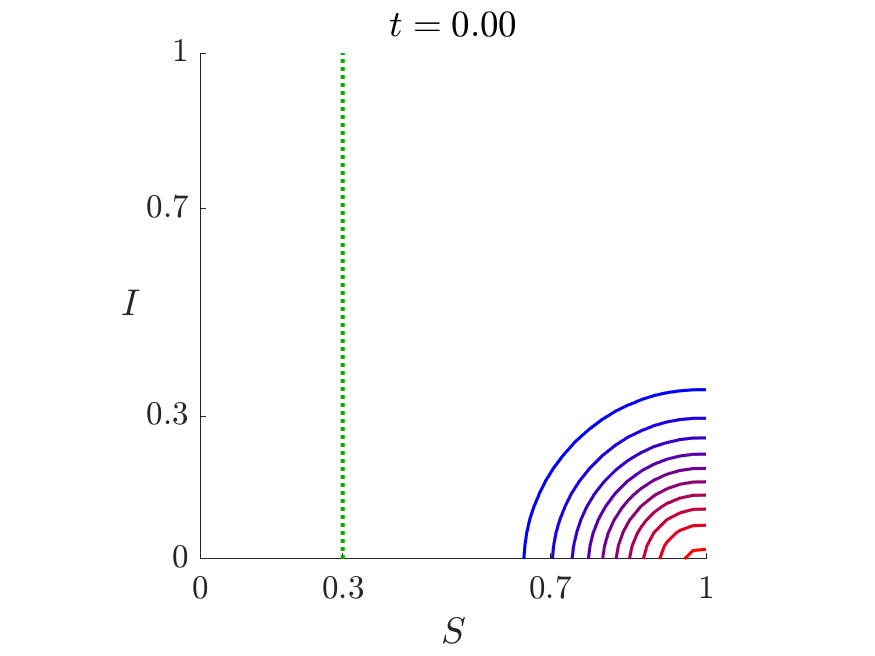} \, \includegraphics[width=0.3\textwidth,trim=55 0 75 0, clip]{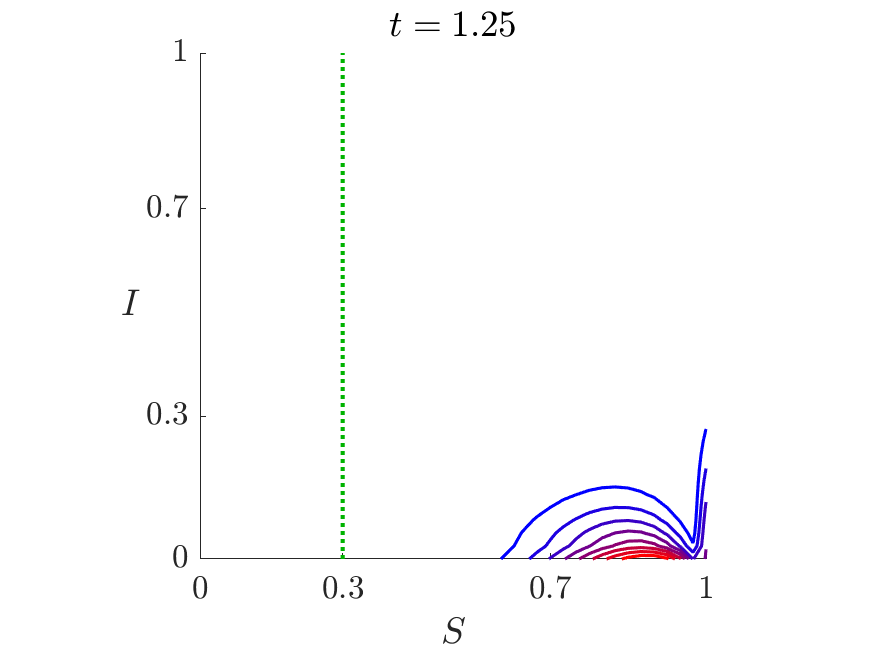} \,
\includegraphics[width=0.3\textwidth,trim=55 0 75 0, clip]{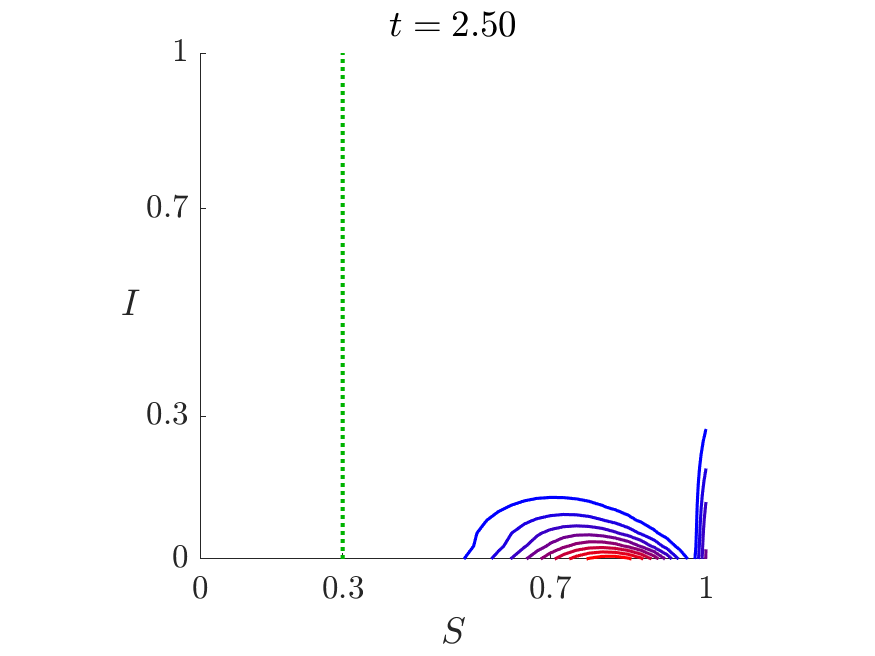} \\
\includegraphics[width=0.3\textwidth,trim=55 0 75 0, clip]{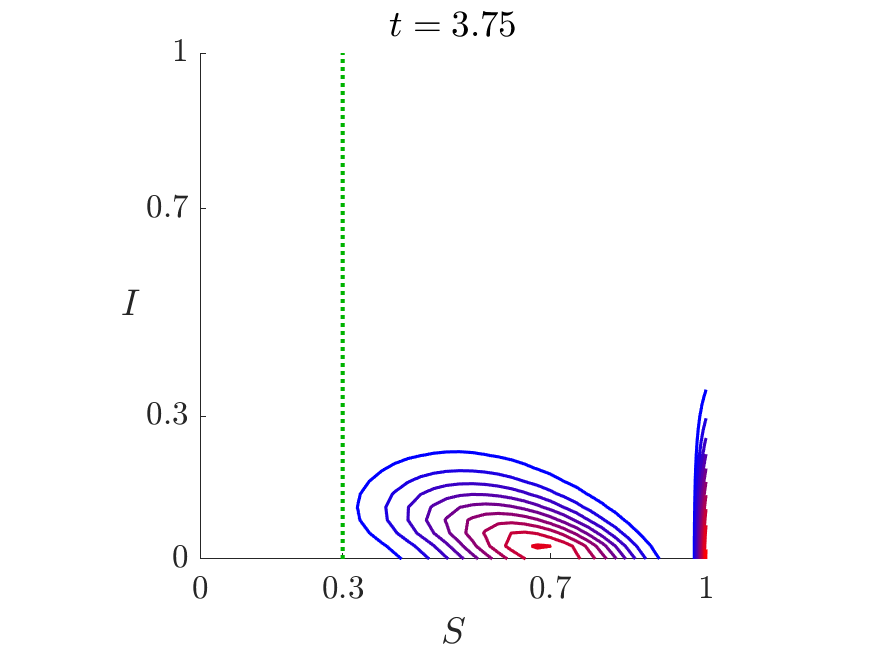} \,
\includegraphics[width=0.3\textwidth,trim=55 0 75 0, clip]{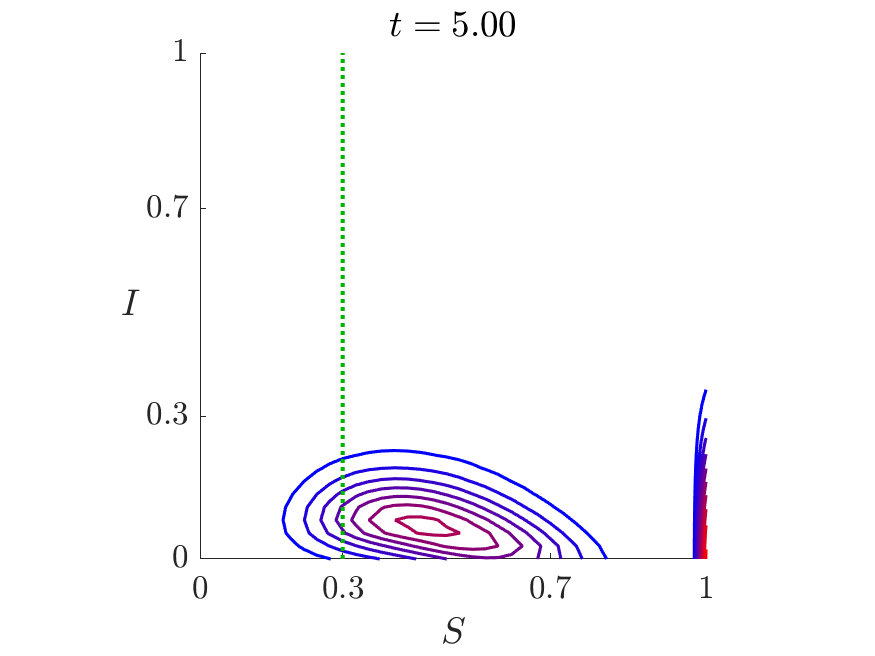} \,
\includegraphics[width=0.3\textwidth,trim=55 0 75 0, clip]{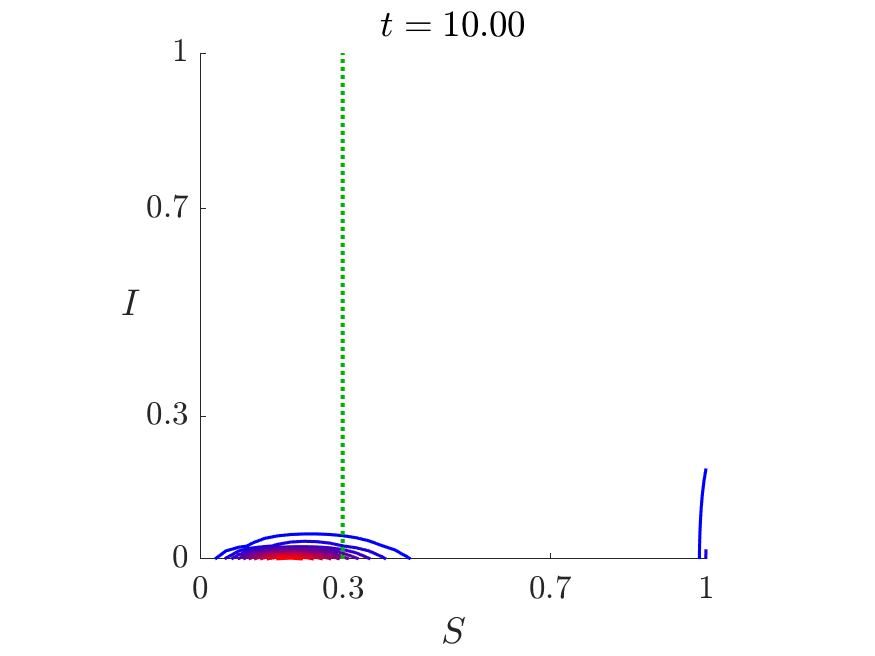} \,
\caption{Scenario 3: $G(x,t) \equiv 0$, $K(x) = -\max(S-0.3,0)$, $v(t)\equiv 0$. Here we simulate a scenario where vaccination is unavailable, and the only goal is to maintain a sufficiently large susceptible population by the end time. In this case, the optimal strategy entails very strong use of NPIs ($\alpha(t)$), and little to no use of treatment efforts ($\eta(t)$). The result is that the mass of the distribution function $f(x,t)$ moves much more slowly toward states with lower $S$ value, and ends with a nontrivial portion of the mass above the threshold $S = 0.3$ (green dotted line). In this case, the particular dynamics land below this threshold at time $t = 10$ (top left).}
\label{fig:scen3}
\end{figure}

\noindent {\bf Scenario 3.} Here we choose running cost $G(x,t) \equiv 0$ and terminal cost $K(x) = -\max(S-0.3,0)$, and we assume that vaccination is unavailable so that $v_{\text{max}} = 0.$ Here, with $K(x)$ negative, this is more akin to a profit that one can achieve by keeping the susceptible population at the end time above the threshold $0.3$. This scenario is meant to model the early stages of an epidemic wherein vaccines are not yet available, and the policy-maker would like to maintain a high level of susceptible population until the time that they are. The results are in figure~\ref{fig:scen3}. In this case, the only control that has meaningful effect is the NPIs ($\alpha(t)$). These are employed at their maximal strength at the outset, and then taper off but continue to be employed at a steady rate throughout the course of the epidemic. This will significantly stunt disease spread, whereupon treatment efforts ($\eta(t)$) are superfluous since there are essentially no new infected individuals who would benefit from an increased recovery rate. We have plotted the threshold $S = 0.3$ as green dotted lines in the plot of the dynamics and the snapshots of the density function $f(x,t).$ Here the effect of the controls is to significantly slow the drift of the mass of $f(x,t)$ toward states with lower $S$, such that, by $t = 10$, there is still a nontrivial amount of mass which lies in the region $S \ge 0.3$. That said, less than half the mass lies in this region, meaning that one is likely to end with $S < 0.3$, as is seen in the dynamics in the top left of figure~\ref{fig:scen3}. This is a key difference between the Fokker-Planck framework for control and the direct control of the ODE system \eqref{eq:controlledSIR}. In this case, one could say that the controls failed for this individual trajectory of the dynamics, in the sense that it lands below the threshold $S = 0.3$ at time $t = 10.$ However, cost is not being assessed for individual trajectories but rather for the distribution of possible trajectories. Here it seems that it was optimal to save some cost by arriving at a distribution wherein some small portion of the trajectories will indeed land above the threshold $S = 0.3$, while many will not.

\tred{It is important to emphasize that the resulting strategy is optimal only with respect to the chosen objective functional. While it preserves a larger susceptible population at the terminal time, it may not be optimal according to traditional epidemiological metrics such as minimizing cumulative infections or achieving rapid disease eradication. Such delay-oriented strategies may nevertheless be relevant when policymakers seek to postpone transmission while awaiting vaccine availability, improved treatments, or increased healthcare-system capacity.}

\section{Conclusions \& Future Work} \label{sec:conclusion}

In this manuscript, we have presented a framework for control of a stochastic compartmental model in mathematical epidemiology. The strategy entails controlling the associated Fokker-Planck equation, rather than controlling individual trajectories of the stochastic system. We formulate an optimal control problem involving a relatively general cost functional, prove existence of optimal controls, and derive first order optimality conditions which characterize the optimal controls. We describe the application of the sequential quadratic Hamiltonian method \cite{SQH} to our problem, and numerically resolve the optimal control maps for three different scenarios involving different policy-maker preferences. 

While the model presented is, admittedly, quite simple, this serves as a proof of concept that the Fokker-Planck control framework can be meaningfully applied to control of stochastic compartmental models for epidemiology, while allowing for uncertainty in initial data. As an immediate avenue of future work, the authors would like to apply a similar framework to more realistic models involving more heterogeneity, and in particular, to models like those developed by the first author which incorporate human behavior \cite{marcelo,OCbehavior2,PW3}. The barrier to doing so is that these models involves several additional compartments, and each compartment in the model becomes an additional ``spatial" dimension in the Fokker-Planck equation, meaning that grid-based numerical methods will be infeasible. Numerically solving high-dimensional PDE is an area of active research with popular approaches including tensor decomposition \cite{TD0,TD1,TD2} and pseudo-spectral methods \cite{PS1,PS2,PS3,PS4}. Development of similar schemes in the context of the Fokker-Planck equation would afford one the opportunity to apply our framework to a much broader class of epidemic models and thus greatly increase in the realism. \tred{Yet another avenue of future work would be to propose and analyze epidemiological models using Vlasov-McKean dynamics, wherein the stochastic differential equations describing populations dynamics also depend on the distribution function. This is done for basic SIS dynamics in \cite{VlasovMcKean}, but development of further models, inclusion of optimal control, and comparison with the Fokker-Planck control framework could prove very interesting. Finally, future work will also focus on integrating epidemiological surveillance data and data-assimilation techniques to enable disease-specific validation and policy evaluation. }

\section*{Acknowledgments} 

CP would like to thank Marcelo Bongarti for helpful discussions regarding the theoretical aspects of the optimal control problem. The work of SR was supported by the National Science Foundation grant numbers 2309491 and 2230790.

%
%

\bibliographystyle{siam}
\bibliography{ref}
\tred{
\section*{Appendix: Proofs of Some Lemmas from Section 3}

We include the proofs of some results that were not proven in the main body. \\

\noindent{\bf Lemma 3.1} \emph{The control to state map $\mS: U_{\text{ad}} \to \mathcal W$ is Lipschitz continuous.}

\begin{proof} For controls $u,\overline u \in U_{\text{ad}}$, we see that $z = \mathcal S(u)- \mathcal S(\overline u) = f-\overline f$ is a weak solution of \begin{equation} \label{eq:z} \partial_t z + \nabla \cdot(\bF(x,u)z) -\frac 1 2 \sum^2_{j=1} \frac{\partial^2}{\partial x_j^2}(\sigma_j(x,u)z) = \Phi(x,u,\overline u,t)\end{equation} where $$\Phi(x,u,\overline u,t) = \nabla \cdot\Big((\bF(x,\overline u)-\bF(x,u))\overline f(x,t)\Big) - \frac 1 2 \sum^2_{j=1} \frac{\partial^2}{\partial x_j^2}\Big((\sigma_j(x,\overline u)^2-\sigma(x,u)^2)\overline f(x,t)\Big),$$ along with $z(\cdot,0) = 0$ and a zero flux boundary condition. Since we have assumed that $\sigma_j(x,u)^2$ is Lipschitz in $u$, using that $\bF(x,u)$ is affine in $u$ and that $\|\overline f\|_{L^2(0,T;H^1(\Omega))}$ is uniformly bounded over choices of $\overline u \in U_{\text{ad}}$, we see $$\|\Phi(x,u,\overline u,t)\|_{L^2(Q)} \le C\|u-\overline u\|_{(L^2[0,T])^3}.$$ Then from classical parabolic estimates \cite[Ch. 9]{WYW}, we have that \begin{equation}
    \label{eq:zbound}\|\mathcal S(u) - \mathcal S(\overline u)\|_{L^2(0,T;H^1(\Omega))} = \|z\|_{L^2(0,T;H^1(\Omega))} \le C\|u-\overline u\|_{(L^2[0,T])^3},
\end{equation} demonstrating that control-to-state map is Lipschitz continuous.\end{proof}

We note that, once again invoking the compact embedding $\mathcal W \hookrightarrow\hookrightarrow L^2(Q)$, the norm on the left hand side of \eqref{eq:zbound} can be replaced by the $L^2(Q)$ norm if desired.\\

\noindent{\bf Lemma 3.2}
    \emph{For any two control maps $u, \overline u \in U_{\text{ad}}$ with respective solutions $f$ and $\overline f$ of \eqref{eq:FPcontrolled}, we have $$J(f,u) - J(\overline f,\overline u) = \int^T_0 \Big[H(t,\overline f(\cdot,t),q(\cdot,t),u(t)) - H(t,\overline f(\cdot,t),q(\cdot,t),\overline u(t)) \Big]dt$$ where $q$ is the solution of \eqref{eq:adjFPcontrolled} corresponding to control $u$.}


\begin{proof}
    Again, we suppress dependence on $(x,t)$ in the functions $f,\overline f,q$ for brevity, and compute \begin{equation} \label{eq:lem11}
    \begin{split}
        J(f,u) - J(\overline f,\overline u) &= \int^T_0 [\ell(u(t)) - \ell(\overline u(t))]dt + \int_\Omega K(x)[f-\overline f](\cdot, T) dx  + \int_0^T\int_\Omega G(x)[f-\overline f] dx\,dt \\
        &= \int^T_0 [\ell(u(t)) - \ell(\overline u(t))]dt + \int_\Omega K(x)[f-\overline f](\cdot, T) dx \\
        &\hspace{1cm}+\int^T_0\int_\Omega \Big(\partial_t q + {\B F(x,u)}\cdot {\nabla q} + \frac 1 2 \sum^2_{j=1} \sigma_j(x,u)^2 \frac{\partial^2q}{\partial x_j^2}\Big)[f-\overline f]dx\, dt\\
        &= \int^T_0 [\ell(u(t)) - \ell(\overline u(t))]dt \\ 
        &\hspace{1cm}+\int^T_0\int_\Omega \bigg(-[\partial_t f - \partial_t \overline f] - \nabla \cdot (\B F(x,u)[f-\overline f])\bigg) q\, dx dt \\ &\hspace{1cm} + \int_0^T\int_\Omega\frac 1 2\sum^2_{j=1} \frac{\partial^2}{\partial x_j^2}\Big(\bS_j(x,u)^2[f-\overline f]\Big)q \, dxdt,
    \end{split}   
    \end{equation} where, in the last line, we have integrated by parts and canceled boundary terms. Now since $f$ satisfies \eqref{eq:FPcontrolled} with control $u$, we see that all terms with $f$ vanish. Likewise, we replace $\partial_t \overline f$ using \eqref{eq:FPcontrolled} with control $\overline u$. Doing so, we see  \begin{equation} \label{eq:lem12}
    \begin{split}
        J(f,u) - J(\overline f,\overline u) &= \int^T_0 [\ell(u(t)) - \ell(\overline u(t))]dt \\
        &\hspace{1cm} + \int_0^T\int_\Omega  \nabla \cdot\Big([\bF(x,u)-\bF(x,\overline u)]\overline f\Big) q \, dx dt \\ &\hspace{1cm}  -\int^T_0 \int_\Omega \frac 1 2\sum^2_{j=1} \frac{\partial^2}{\partial x_j^2}\Big((\sigma_j(x,u)^2 - \sigma(x,\overline u)^2\overline f\Big)\bigg)q \, dxdt\\
        &= \int^T_0 \ell(u(t)) - \bigg(\int_\Omega  \overline f \bF(x,u)\cdot \nabla q - \frac 1 2 \sum^2_{j=1} \overline f \sigma_j(x,u)^2 \frac{\partial^2 q}{\partial  x_j^2} dx \bigg)dx  \\
        &\hspace{1cm}-\int^T_0 \ell(\overline u(t)) - \bigg(\int_\Omega  \overline f \bF(x,\overline u)\cdot \nabla q - \frac 1 2 \sum^2_{j=1} \overline f \sigma_j(x,\overline u)^2 \frac{\partial^2 q}{\partial  x_j^2} dx \bigg)dx\\
        &= \int^T_0 \Big[H(t,\overline f(\cdot,t),q(\cdot,t),u(t)) - H(t,\overline f(\cdot,t),q(\cdot,t),\overline u(t)) \Big]dt
    \end{split}   
    \end{equation} as desired.
\end{proof}}

\bigskip\noindent 
Christian Parkinson, Department of Mathematics \& Department of Computational Mathematics, Science and Engineering, Michigan State University, East Lansing, MI, USA;
e-mail: \url{chparkin@msu.edu}

\medskip\noindent 
Souvik Roy, Department of Mathematics, University of Texas at Arlington, Arlington, TX, USA;
 e-mail: \url{souvik.roy@uta.edu}

\end{document}